\newtheorem{theorem}{Theorem}[section]
\newtheorem{lemma}[theorem]{Lemma}
\newtheorem{proposition}[theorem]{Proposition}
\newtheorem{corollary}[theorem]{Corollary}
\theoremstyle{definition}
\theoremstyle{remark}
\newtheorem{remark}[theorem]{Remark}
\numberwithin{equation}{section}
\newcommand{\e}{\varepsilon}
\newcommand{\R}{\mathbb{R}}
\newcommand{\Rn}{\mathbb{R}^n}
\newcommand{\Hn}{\mathcal{H}^{n-1}}
\newcommand{\ue}{u_\varepsilon}
\newcommand{\Oe}{\Omega_\varepsilon}
\providecommand{\ISBN}[1]{\textsc{isbn}: #1}  
\begin{document}
	
\title [Sharp stability on the second Robin eigenvalue]{Sharp stability on the second Robin eigenvalue with negative boundary parameters}

	\author{Zhijie Chen}
	\address{Department of Mathematical Sciences, Yau Mathematical Sciences Center, Tsinghua University, Beijing, 100084, China}
	\email{zjchen2016@tsinghua.edu.cn}

	\author{Zhen Song}    
	\address{Department of Mathematical Sciences, Tsinghua University, Beijing 100084, China}
	\email{sz21@mails.tsinghua.edu.cn}

	\author{Wenming Zou}
	\address{Department of Mathematical Sciences, Tsinghua University, Beijing 100084, China}
	\email{zou-wm@mail.tsinghua.edu.cn}

	\subjclass[2020]{Primary 35P15; Secondary 47A75; 35J05.}

	\keywords{Sharp stability estimates; second Robin eigenvalue; isoperimetric type inequality.}

	\begin{abstract}
		In this paper, we prove a quantitative refinement of the isoperimetric type inequality for the second Robin eigenvalue with negative boundary parameters established by Freitas and Laugesen [\emph{Amer.~J.~Math.} \textbf{143} (2021), no.~3, 969--994]. 
		By constructing a suitable family of nearly spherical domains, we prove that the exponent for the Fraenkel asymmetry in this quantitative type inequality is sharp.
	\end{abstract}
	
 	\maketitle

	\section{Introduction and main results}
	\renewcommand{\thethm}{\Alph{thm}}
	
	Let $\Omega$ be a bounded Lipschitz domain in $\R^n$, $n\geq2$. Consider the Robin eigenvalue problem for the Laplace operator on $\Omega$
	\begin{equation}\label{robin problem}
		\begin{cases}
			-\Delta u=\lambda u \quad &\hbox{in}\ \Omega, \\
			\frac{\partial u}{\partial \nu}+\alpha u=0 \quad &\hbox{on}\ \partial\Omega, \\
		\end{cases}
	\end{equation}
where $\nu:\partial\Omega\to\R^n$ is the outer unit normal and $\alpha\in\R$ is the boundary parameter. It is well-known that the eigenvalues $\{\lambda_k(\Omega;\alpha)\}_{k=1}^\infty$ are increasing and continuous as functions of the boundary parameter $\alpha$, and satisfy
$$
\lambda_1(\Omega;\alpha)<\lambda_2(\Omega;\alpha)\leq\lambda_3(\Omega;\alpha)\leq\cdots,\quad \lambda_k(\Omega;\alpha)\to+\infty\quad\text{as}\quad k\to+\infty,
$$
for each fixed $\alpha$. The Robin spectrum connects the Neumann ($\alpha=0$), Dirichlet ($\alpha\to\infty$) and Steklov ($\lambda=0$) eigenvalue problems, and has received great interest in the past years.\vspace{0.5em}

An interesting and important topic for the Robin eigenvalue problem is the shape optimization under a fixed volume constraint. When the boundary parameter $\alpha$ is positive, a Faber-Krahn type inequality is valid for the first Robin eigenvalue, which was proved by Bossel \cite{Bossel} in the two-dimensional case and by Daners \cite{Daners=MathAnn} in the $n$-dimensional case. To be more precise, they proved that if $B$ is a ball of the same volume as $\Omega$, then
\begin{equation}\label{classic faberkrahn}
	\lambda_1(\Omega;\alpha)\geq \lambda_1(B;\alpha).
\end{equation}
For alternative approaches, we refer the readers to \cite{Bucur=CVPDE,Bucur=ARMA2010,Bucur=ARMA2015}. When the boundary parameter $\alpha$ is negative, things become more involved. In 2015, it was proved by Ferone, Nitsch and Trombetti \cite{Ferone=CPAA} that the ball is a local maximizer for the first Robin eigenvalue. In the same year Freitas and Krejčiřík \cite{Freitas=Adv} proved that in the two-dimensional case, the disk is a global maximizer among planar domains for all $\alpha\in [\alpha_*,0)$, where $\alpha_*$ is a negative number depending on the area. Equivalently, they actually derived a reverse Faber-Krahn type inequality. However, in the $n$-dimensional case, they \cite{Freitas=Adv} also proved that the ball cannot remain a global maximizer for large negative boundary parameter $\alpha$ by showing that the first Robin eigenvalue of a spherical shell is bigger than the first Robin eigenvalue of the ball under the same volume constraint.\vspace{0.5em}

For the positive boundary parameter $\alpha>0$, we know the equality in the Bossel-Daners inequality \eqref{classic faberkrahn} holds if and only if $\Omega$ is a ball, which naturally leads to the question of stability. The first quantitative estimate was given by Bucur, Ferone, Nitsch and Trombetti \cite{Bucur=JDE}. Recall that the Fraenkel asymmetry of $\Omega$ is defined by (see for instance \cite{Brasco=GAFA,Maggi=Ann Math})
\begin{equation}\label{Fraenkel}
	\mathcal{A}(\Omega)\coloneqq\min\left\{\frac{|\Omega\Delta B|}{|B|}:\,B\text{ is a ball},\,|B|=|\Omega|\right\},
\end{equation}
where $\Omega\Delta B:=(\Omega\setminus B )\sqcup (B\setminus\Omega)$. Then
they \cite{Bucur=JDE} proved that there exists a positive constant $c=c(n,\alpha,|\Omega|)$ depending only on $n$, $\alpha$ and $|\Omega|$, such that
\begin{equation}\label{classic faberkrahn quantitative}
	\lambda_1(\Omega;\alpha)- \lambda_1(B;\alpha)\geq c(n,\alpha,|\Omega|)\mathcal{A}(\Omega)^2,
\end{equation}
where $B$ is a ball of the same volume as $\Omega$. They also proved that the power $2$ for the Fraenkel asymmetry is sharp by using the perturbation of the ball in ellipsoids. We also refer the interested readers to \cite{Amato talenti} for alternative approach to establish \eqref{classic faberkrahn quantitative} in dimension 2, using a quantitative Talenti-type comparison result with Robin boundary conditions. For the negative boundary parameter $\alpha<0$, whether such quantitative inequality exists is still open. We refer the readers to \cite{Cito=ESAIM} for a quantitative form of the reverse Faber-Krahn type inequality for the first Robin eigenvalue among convex sets of prescribed perimeter, not volume.\vspace{0.5em}

Now let us turn to the second Robin eigenvalue.
For the positive boundary parameter $\alpha>0$, the minimizer is, as in the Dirichlet case, the disjoint union of two equal balls \cite{Kennedy=PAMS,Kennedy=ZAMP}. More concretely, let $\Omega\subset\R^n$ be a bounded Lipschitz domain  and $B_1$, $B_2$ be two disjoint equal balls with volume $|\Omega|/2$. Then for any $\alpha>0$,
\begin{equation}\label{Faberkrahn 2}
	\lambda_2(\Omega;\alpha)\geq\lambda_2(B_1\sqcup B_2;\alpha)
\end{equation}
with equality if and only if $\Omega$ is itself a disjoint union of two equal balls.\vspace{0.5em}

For $\alpha<0$, it was conjectured by Bucur, Freitas and Kennedy \cite[Open Problem 4.41]{Henrot=2017Book} that $\lambda_2(\Omega;\alpha)$ should be maximal for the ball on a range of values $(\alpha^*,0)$ for some $\alpha^*<0$.  In 2021, a significant progress on this open problem has been made by Freitas and Laugesen \cite{Laugesen=AJM} by using Weinberger's classic mass transplantation argument. 

 \vspace{0.5em}
\noindent{\bf Theorem A.} \cite{Laugesen=AJM}
{\it Let $\Omega\subset\R^n$ be a bounded Lipschitz domain and $B$ be a ball of the same volume as $\Omega$, then
\begin{equation}\label{Laugesen}
	\lambda_2(\Omega;\alpha)\leq\lambda_2(B;\alpha),\quad\text{for }\alpha\in\left[-\frac{n+1}{n}R^{-1},0\right],
\end{equation}
where $R$ is the radius of $B$. Moreover, the equality holds if and only if $\Omega$ is a ball.}
 \vspace{0.5em}

Motivated by \eqref{classic faberkrahn quantitative}, it would be of great significance to refine the Freitas-Laugesen inequality \eqref{Laugesen} with a remainder term. To our best knowledge, this stability type question for the second Robin eigenvalue has not been settled in the literature and remains open.\vspace{0.5em}

In this paper, we will refine the Freitas-Laugesen inequality \eqref{Laugesen} with a quantitative term involving the Fraenkel asymmetry. The main result of this paper is the following quantitative Freitas-Laugesen inequality.

\begin{theorem}\label{main result one}
	Let $\Omega$ be a bounded Lipschitz domain in $\R^n$, $n\geq2$, $B$ be a ball of the same volume as $\Omega$, and denote $R$ the radius of $B$. Then there exist a positive constant $\gamma=\gamma(n,\alpha,R)$ depending only on $n$, $\alpha$ and $R$, such that
	\begin{equation}\label{main result one inequality}
		\lambda_2(B;\alpha)-\lambda_2(\Omega;\alpha)\geq\gamma(n,\alpha,R)\mathcal{A}(\Omega)^2,\quad\text{for }\alpha\in \left(-R^{-1}, 0\right).
	\end{equation}
\end{theorem}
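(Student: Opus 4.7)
The plan is to refine the Weinberger-type mass transplantation that underlies Theorem~A in order to obtain a quantitative deficit, and then upgrade the resulting linear stability estimate to the sharp quadratic one via a reduction to nearly spherical domains.

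First I would quantify Freitas--Laugesen's transplantation. Let $g$ be the radial profile of the second Robin eigenfunction on $B = B_R(0)$, extend it by $g(R)$ for $r > R$, and denote the extension by $G$. Consider the $n$ test functions
$$\phi_i(x)=G(|x-x_0|)\frac{(x-x_0)_i}{|x-x_0|},\qquad i=1,\dots,n,$$
with $x_0\in\Rn$ chosen by a Brouwer fixed-point argument so that each $\phi_i$ is $L^2(\Omega)$-orthogonal to the first Robin eigenfunction on $\Omega$. Inserting the $\phi_i$'s into the variational characterization of $\lambda_2(\Omega;\alpha)$, summing in $i$, and using $\sum_i(x-x_0)_i^2=|x-x_0|^2$ to eliminate the angular parts yields
$$\lambda_2(B;\alpha)-\lambda_2(\Omega;\alpha)\ \geq\ \frac{1}{\int_\Omega G^2\,dx}\int_{\Rn}A(|x-x_0|)\bigl(\chi_\Omega-\chi_{B_R(x_0)}\bigr)\,dx+(\text{boundary deficit})$$
for an explicit radial function $A$ built from $g$, $g'$, $\lambda_2(B;\alpha)$ and $\alpha$. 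The hypothesis $\alpha\in(-R^{-1},0)$ is precisely what forces $A\le 0$ on $(0,R)$ and $A\ge 0$ on $(R,\infty)$ (strictly away from $r=R$), so both contributions are nonnegative and yield the linear stability bound $\lambda_2(B;\alpha)-\lambda_2(\Omega;\alpha)\geq c(n,\alpha,R)\,|\Omega\Delta B_R(x_0)|$.

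Next, to upgrade the linear bound to a quadratic one in $\mathcal{A}(\Omega)$, I would implement a selection principle in the spirit of Cicalese--Leonardi and Brasco--De~Philippis--Velichkov. Assuming \eqref{main result one inequality} fails, one extracts a sequence of shape quasi-minimizers of the penalized functional
$$\Omega\ \mapsto\ \lambda_2(B;\alpha)-\lambda_2(\Omega;\alpha)+\Lambda\bigl|\,|\Omega|-|B|\,\bigr|-\varepsilon_k\,\mathcal{A}(\Omega)^2,\qquad \varepsilon_k\downarrow 0,$$
with the linear stability furnishing compactness and uniform perimeter bounds. Regularity theory for spectral shape optimization then forces the limit to be a nearly spherical domain $\Omega_u=\{|x|<R(1+u(\theta))\}$ with $\|u\|_{C^{1,\gamma}(\partial B)}$ small and $u$ having vanishing mean and vanishing first (translation) Fourier mode. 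On such $\Omega_u$ a Fuglede-type expansion of the form
$$\lambda_2(\Omega_u;\alpha)=\lambda_2(B;\alpha)+Q_\alpha(u)+o\bigl(\|u\|_{H^1(\partial B)}^2\bigr)$$
holds for a second shape-derivative quadratic form $Q_\alpha$, and it suffices to establish $-Q_\alpha(u)\geq c\,\|u\|_{L^2(\partial B)}^2$ on the admissible subspace, combined with the elementary bound $\mathcal{A}(\Omega_u)^2\leq C\,\|u\|_{L^2(\partial B)}^2$.

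The main obstacle will be the second-variation computation in the last step. Because $\lambda_2(B;\alpha)$ carries an $n$-dimensional eigenspace, $\lambda_2(\cdot;\alpha)$ is not classically differentiable at the ball and one must run the perturbation analysis in a multiple-eigenvalue Lyapunov--Schmidt framework: decompose $u$ into spherical harmonic modes, expand the eigenvalue branches, and symmetrize appropriately so as to produce a well-defined quadratic form $Q_\alpha$. Verifying its strict negative definiteness on the admissible subspace (after quotienting out translations, which preserve the class to leading order) is delicate, and is precisely where the sharper parameter restriction $\alpha\in(-R^{-1},0)$, strictly smaller than the Freitas--Laugesen range $[-\tfrac{n+1}{n}R^{-1},0]$, is expected to enter.
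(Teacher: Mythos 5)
Your first step contains a genuine error that undermines the whole plan: the transplantation argument cannot yield a \emph{linear} stability bound $\lambda_2(B;\alpha)-\lambda_2(\Omega;\alpha)\geq c\,|\Omega\Delta B_R(x_0)|$. The gain function in the deficit (in the paper's notation, $h(R)-h(r)$ for the decreasing weight $h$, and $g(r)^2-g(R)^2$ for the increasing weight $g^2$) vanishes on $\partial B$, so the mass displaced into the thin annulus $\{R<|x|<R_2\}$ of width $\sim\beta$ contributes only $\int_R^{R_2}c\,(r-R)\,r^{n-1}\,dr\sim\beta^2$; the deficit is intrinsically quadratic in the asymmetry, not linear. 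Indeed, a linear lower bound would contradict the sharpness statement (Theorem \ref{main result three}), since the nearly spherical domains $\Omega_\varepsilon$ of Section 4 satisfy $|\Omega_\varepsilon\Delta B|\simeq\varepsilon$ while $\lambda_2(B;\alpha)-\lambda_2(\Omega_\varepsilon;\alpha)\leq C\varepsilon^2$. Once the linear bound is gone, the selection-principle scaffolding loses its entry point, and in any case it imports serious unresolved issues you only partially acknowledge: existence and regularity theory for quasi-minimizers of Robin spectral functionals with negative boundary parameter is not available off the shelf (this is the free-discontinuity regime of Bucur--Giacomini), and the Fuglede/second-variation step is obstructed by the $n$-fold multiplicity of $\lambda_2(B;\alpha)$ — precisely the difficulty the paper confronts only in the sharpness construction, not in the proof of the inequality. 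A smaller but relevant point: you extend $g$ by the constant $g(R)$, whereas the correct (Freitas--Laugesen) extension for $\alpha\neq0$ is $g(R)e^{-\alpha(r-R)}$, which keeps $g\in C^1$ via $g'(R)=-\alpha g(R)$ and is what makes the exterior weight $h$ explicitly computable and strictly decreasing.

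The paper's actual proof is much more direct: starting from the Freitas--Laugesen chain $\lambda_2(\Omega;\alpha)\int_\Omega g^2\leq\int_\Omega h\leq\int_B h=\lambda_2(B;\alpha)\int_B g^2$ (valid since $\lambda_2(B;\alpha)>0$ exactly when $\alpha>-R^{-1}$, which is where the parameter restriction enters — not through any second-variation positivity), it quantifies the strict monotonicity of $h$ outside $B$ by the explicit derivative bound $-h'(r)\geq \tfrac{7(n-1)}{8}g(R)^2 r^{-3}$ and elementary inequalities, obtaining $\int_B h-\int_\Omega h\geq c(n)\,g(R)^2R^{n-2}\beta^2$ with $\beta=|\Omega\Delta B|/|\Omega|\geq\mathcal{A}(\Omega)$, and hence the quadratic estimate with an explicit constant. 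So the quadratic exponent falls out of the transplantation itself; no reduction to nearly spherical domains, no Fuglede expansion, and no shape-regularity theory is needed for Theorem \ref{main result one}.
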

Notice that $\lambda_2(\Omega;0)=\mu_2(\Omega)$ is the first non-trivial Neumann eigenvalue. Letting $\alpha\to 0^{-}$ in \eqref{main result one inequality},  we can recover the quantitative Szeg\"o-Weinberger inequality that was established first by Brasco and Pratelli \cite{Brasco=GAFA}.
\begin{corollary}\label{main result two}
	Let $\Omega$ be a bounded Lipschitz domain in $\R^n$, $n\geq2$, and $B$ be a ball of the same volume as $\Omega$. Let $\mu_2(\Omega)>0$ be the first non-trivial Neumann eigenvalue of the Laplace operator on $\Omega$, then
	\begin{equation}\label{main result two inequality}
		\mu_2(B)-\mu_2(\Omega)\geq\delta(n)|\Omega|^{-2/n}\mathcal{A}(\Omega)^2,
	\end{equation}
where $\delta=\delta(n)$ is a positive constant depending only on $n$.
\end{corollary}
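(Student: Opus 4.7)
The plan is to deduce the inequality \eqref{main result two inequality} by passing to the limit $\alpha\to 0^-$ in the quantitative estimate \eqref{main result one inequality} of Theorem \ref{main result one}. Fix a bounded Lipschitz domain $\Omega\subset\R^n$ and let $B$ denote the ball of the same volume, with radius $R=(|\Omega|/\omega_n)^{1/n}$ where $\omega_n$ is the volume of the unit ball. For every $\alpha\in(-R^{-1},0)$, Theorem \ref{main result one} yields
\begin{equation*}
\lambda_2(B;\alpha)-\lambda_2(\Omega;\alpha)\geq\gamma(n,\alpha,R)\,\mathcal{A}(\Omega)^2.
\end{equation*}
Since Robin eigenvalues depend continuously on the boundary parameter (a standard consequence of the min--max characterization) and $\lambda_2(\cdot;0)=\mu_2(\cdot)$, the left-hand side converges to $\mu_2(B)-\mu_2(\Omega)$ as $\alpha\to 0^-$, so only the constant on the right-hand side requires attention.

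Next I would identify the dependence of $\gamma(n,\alpha,R)$ on $R$ and extract a non-degenerate limit as $\alpha\to 0^-$. The natural dilation scaling of the Robin problem reads
\begin{equation*}
\lambda_2(t\Omega;\alpha)=t^{-2}\lambda_2(\Omega;t\alpha),\qquad \mathcal{A}(t\Omega)=\mathcal{A}(\Omega),\qquad t>0.
\end{equation*}
Applying Theorem \ref{main result one} to the rescaled pair $(\Omega/R,B/R)$ (where $B/R$ is the unit ball) and comparing with the original inequality forces the dimensional form $\gamma(n,\alpha,R)=R^{-2}\,g(n,\alpha R)$ for a dimensionless function $g$. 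Inspecting the construction of $\gamma$ in the proof of Theorem \ref{main result one}, one should verify that $g(n,s)$ extends (lower-semi)continuously to $s=0$ with $g(n,0)=c(n)>0$. Combined with $R^{-2}=\omega_n^{2/n}|\Omega|^{-2/n}$, letting $\alpha\to 0^-$ then yields \eqref{main result two inequality} with $\delta(n)=c(n)\,\omega_n^{2/n}$.

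The main obstacle is precisely the non-degeneracy of $g$ at $s=0$: if the explicit construction in Theorem \ref{main result one} happens to produce a prefactor of the form $|\alpha|$ or $|\alpha R|$, the limiting argument collapses and the corollary has to be obtained by running the proof of Theorem \ref{main result one} with $\alpha=0$ directly, dropping the Robin boundary integrals and adjusting the Weinberger-type trial functions accordingly. Provided the dependence on $\alpha$ is continuous up to $0$, the result follows with an explicit dimensional constant traceable back through the estimates of Theorem \ref{main result one}.
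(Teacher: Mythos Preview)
Your proposal is correct and follows essentially the same route as the paper: let $\alpha\to 0^-$ in \eqref{main result one inequality}, use continuity of $\alpha\mapsto\lambda_2(\cdot;\alpha)$ for the left-hand side, and check that the explicit constant $\gamma(n,\alpha,R)$ from the proof of Theorem~\ref{main result one} has a strictly positive limit. The paper carries this out concretely: from the explicit formula \eqref{conclusion} one sees $\gamma(n,\alpha,R)=\eta(n,\alpha,R)^{-1}|\Omega|^{-2/n}\omega_n J_{n/2}(\sqrt{\lambda_2(\mathbb{B};R\alpha)})^2\tfrac{7(n-1)}{16^2 n}$, and since both $\eta$ and the Bessel factor depend on $\alpha$ only through $\lambda_2(\mathbb{B};R\alpha)\to\mu_2(\mathbb{B})>0$, the limit $\delta(n)$ is written down explicitly and your feared $|\alpha|$-degeneracy does not occur. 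One small caution: your scaling argument only shows that $R^{-2}g(n,\alpha R)$ is \emph{a} valid constant, not that the particular $\gamma$ produced in the proof necessarily has this form; but since you then defer to the explicit construction anyway, this is harmless.
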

\begin{remark} The constants $\gamma(n,\alpha,R)$ in Theorem \ref{main result one} and $\delta(n)$ in Corollary \ref{main result two} are known explicitly, see Section 3.
	Since the scale invariance is not preserved in the Robin eigenvalue problem due to the presence of the boundary parameter $\alpha$, the constants in \eqref{classic faberkrahn quantitative} and \eqref{main result one inequality} depend on the volume of $\Omega$ or equivalently on $R$ (notice that $|\Omega|=|B|=\omega_n R^n$ where $\omega_n$ is the volume of the unit ball in $\R^n$).
\end{remark}
\begin{remark}
	Compared Theorem \ref{main result one} with Theorem A, the reason why we can not prove \eqref{main result one inequality} for $\alpha\in [-\frac{n+1}{n}R^{-1}, -R^{-1}]$ is
	$$\lambda_2(B;\alpha)\begin{cases}>0\quad\text{if }\; \alpha>-R^{-1},\\
	=0\quad\text{if }\; \alpha=-R^{-1},\\
	<0\quad\text{if }\; \alpha<-R^{-1},\end{cases}$$
	which was proved in \cite{Laugesen=AJM} (See Proposition \ref{second} below). One will see in Section 3 that $\lambda_2(B;\alpha)>0$ is important in our proof of \eqref{main result one inequality}. 
\end{remark}
Whether the exponent $2$ for the Fraenkel asymmetry $\mathcal{A}(\Omega)$ in the quantitative Freitas-Laugesen inequality \eqref{main result one inequality} is sharp or not is of great importance. For our problem, it is much more difficult than the one for the first Robin eigenvalue problem with positive boundary parameters because $\lambda_2(B;\alpha)=\cdots=\lambda_{n+1}(B;\alpha)$ is of multiplicity $n$ and thus may be not differentiable. Similar phenomena have been observed in \cite{Brasco=Jfa} and \cite{Brasco=GAFA} before. For the Robin problem, some extra difficulties appear since the first eigenfunction is not constant, which makes this problem more tricky. We must choose the test domains with better properties to prove the sharpness of the exponent $2$, and the last main result of this paper is as follows.
\begin{theorem}\label{main result three}
	The power $2$ for the Fraenkel asymmetry $\mathcal{A}(\Omega)$ in the quantitative Freitas-Laugesen inequality \eqref{main result one inequality} is sharp.
\end{theorem}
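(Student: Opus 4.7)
The strategy is to produce an explicit family of nearly spherical perturbations of $B$ for which \eqref{main result one inequality} is saturated, so that no exponent strictly smaller than $2$ is possible. Fix a nontrivial spherical harmonic $Y:S^{n-1}\to\mathbb{R}$ of degree $k\geq 3$; by orthogonality of spherical harmonics, $Y$ is $L^2(S^{n-1})$-orthogonal to the constants, to the linear functions $\theta_i$, and to every quadratic polynomial $\theta_i\theta_j$. For $\varepsilon>0$ small, define
\[
\Omega_\varepsilon=\left\{r\theta:\theta\in S^{n-1},\,0\leq r<R\bigl(1+\varepsilon Y(\theta)+\varepsilon^2 c_\varepsilon\bigr)\right\},
\]
where the bounded constant $c_\varepsilon$ is chosen so that $|\Omega_\varepsilon|=|B|$ exactly. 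Because $Y$ has neither constant nor degree-$1$ component, the volume constraint is satisfied by an $O(\varepsilon^2)$ correction, and the barycenter of $\Omega_\varepsilon$ deviates from the origin by only $O(\varepsilon^2)$.

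A direct slicing computation gives $|\Omega_\varepsilon\Delta B|=R^n\varepsilon\int_{S^{n-1}}|Y|\,d\theta+O(\varepsilon^2)$, and combined with the $O(\varepsilon^2)$ barycenter shift this yields $\mathcal{A}(\Omega_\varepsilon)\geq c_1\varepsilon$ for some $c_1>0$ independent of $\varepsilon$. The matching upper bound on the eigenvalue gap rests on the following observation. The eigenspace of $\lambda_2(B;\alpha)$ is $n$-dimensional, spanned by $u_i(x)=f(|x|)\,x_i/|x|$ for $i=1,\ldots,n$, with $f$ a radial Bessel-type profile determined by the Robin condition at $|x|=R$. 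In the multiple-eigenvalue Hadamard framework, the $n$ branches $\lambda_2^{(1)}(\Omega_\varepsilon;\alpha)\leq\cdots\leq\lambda_2^{(n)}(\Omega_\varepsilon;\alpha)$ into which $\lambda_2(B;\alpha)$ splits satisfy $\lambda_2^{(j)}(\Omega_\varepsilon;\alpha)=\lambda_2(B;\alpha)+\varepsilon\mu_j+O(\varepsilon^2)$, where $\mu_1\leq\cdots\leq\mu_n$ are the eigenvalues of the symmetric matrix
\[
M_{ij}=-\int_{\partial B}\mathcal{Q}(u_i,u_j)\,R\,Y(\theta)\,dS(\theta),
\]
with $\mathcal{Q}$ the quadratic form in $u$ and its tangential gradient appearing in the Robin shape derivative. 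Since each $u_i\big|_{\partial B}=f(R)\theta_i$ is a degree-$1$ spherical harmonic, every entry $\mathcal{Q}(u_i,u_j)\big|_{\partial B}$ is a linear combination of constants and the degree-$2$ spherical harmonics $\theta_i\theta_j$; by the choice of $Y$, each such entry integrates to zero against $Y$. Hence $M\equiv 0$, every $\mu_j=0$, and $\lambda_2(\Omega_\varepsilon;\alpha)=\lambda_2(B;\alpha)+O(\varepsilon^2)$, which together with Theorem A gives $\lambda_2(B;\alpha)-\lambda_2(\Omega_\varepsilon;\alpha)\leq C\varepsilon^2\leq K\mathcal{A}(\Omega_\varepsilon)^2$, forbidding any exponent $p<2$ in \eqref{main result one inequality}.

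The main obstacle is the rigorous justification of the asymptotic expansion in the presence of the $n$-fold multiplicity of $\lambda_2(B;\alpha)$. Rather than invoking a general degenerate-perturbation theorem, I plan to verify the expansion by an explicit min-max argument: transplant $u_1,\ldots,u_n$ to $\Omega_\varepsilon$ via the natural radial diffeomorphism built into the definition of $\Omega_\varepsilon$, form the associated $n$-dimensional trial subspace (made $L^2(\Omega_\varepsilon)$-orthogonal to the first Robin eigenfunction of $\Omega_\varepsilon$), and expand the resulting Rayleigh matrix in $\varepsilon$ to second order. The $O(\varepsilon)$ contribution is exactly the matrix $M$ above and vanishes by construction, while the $O(\varepsilon^2)$ remainder reduces to an explicit finite integral bounded uniformly in the choice of representative from the eigenspace. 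The Robin-specific complications—the non-constancy of the first eigenfunction, the $\alpha$-dependence of the radial profile $f$, and the boundary-parameter contributions to $\mathcal{Q}$—enter only as harmless lower-order pieces of this expansion, and the quoted degeneracy-free structure of the standard Brasco–Pratelli sharpness argument can then be adapted essentially verbatim.
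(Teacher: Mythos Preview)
Your overall strategy---perturb $B$ by a spherical harmonic of degree $\geq 3$ so that the first-order shape variation of $\lambda_2$ vanishes---matches the paper's. The construction of $\Omega_\varepsilon$, the claim $\mathcal{A}(\Omega_\varepsilon)\simeq\varepsilon$, and the observation that the matrix $M$ vanishes because $\xi_i\xi_j|_{\partial B}$ and $|\nabla_{\rm tan}\xi_i|^2|_{\partial B}$ are combinations of degree-$0$ and degree-$2$ harmonics are all correct and appear in the paper in essentially the same form.

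The gap is in your proposed verification of the expansion. Since Freitas--Laugesen already gives $\lambda_2(\Omega_\varepsilon;\alpha)\leq\lambda_2(B;\alpha)$, what you must prove is the \emph{lower} bound $\lambda_2(\Omega_\varepsilon;\alpha)\geq\lambda_2(B;\alpha)-C\varepsilon^2$. Transplanting $u_1,\dots,u_n$ from $B$ to $\Omega_\varepsilon$ and computing a Rayleigh matrix on $\Omega_\varepsilon$ can only yield \emph{upper} bounds on the eigenvalues of $\Omega_\varepsilon$; no amount of second-order expansion of that matrix gives information in the direction you need. To bound $\lambda_2(\Omega_\varepsilon;\alpha)$ from below via a trial-function argument, one has to go the other way: take an actual second eigenfunction $u_\varepsilon$ of $\Omega_\varepsilon$, extend it to $B$, and use it as a test function for $\lambda_2(B;\alpha)$. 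This is exactly what the paper does, and it is where the real work lies.

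Once you reverse the direction, the difficulty you dismiss as ``harmless lower-order pieces'' becomes the crux: the extended $u_\varepsilon$ must be made $L^2(B)$-orthogonal to the first Robin eigenfunction $u_1^B$, which is radial but not constant. A priori the projection coefficient $\int_B \widetilde u_\varepsilon\, u_1^B$ is only $O(1)$, and even with crude estimates only $O(\varepsilon)$, which pollutes the Rayleigh quotient at order $\varepsilon$ rather than $\varepsilon^2$. The paper resolves this by imposing an \emph{additional} symmetry on $\psi$ (invariance under every reflection $x_i\mapsto -x_i$) and proving a nodal-domain lemma (their Lemma~4.3) guaranteeing that some second eigenfunction $u_\varepsilon$ is odd in one coordinate; oddness then forces $\int_{B\cap\Omega_\varepsilon}u_\varepsilon\, u_1^B=0$ exactly, so the projection is genuinely $O(\varepsilon)$ and its effect on the Rayleigh quotient is $O(\varepsilon^2)$. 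Your choice of an arbitrary degree-$k$ harmonic $Y$ does not have this reflection symmetry, and without it (or an equivalent device) the orthogonalization step fails at the required order. After this, the paper still needs a bootstrap (spectral decomposition plus elliptic regularity) to upgrade the initial $O(\varepsilon)$ remainder to $O(\varepsilon^2)$, again using the vanishing of $\int_{\partial B}\psi\,\xi_\varepsilon^2$ and $\int_{\partial B}\psi\,|\nabla_{\rm tan}\xi_\varepsilon|^2$; this iteration is not visible in your sketch either.
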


Theorem \ref{main result one} and Theorem \ref{main result three} can be viewed as a sharp quantitative refinement of the Freitas-Laugesen inequality \eqref{Laugesen}, and also as a generalization of the stability result \eqref{main result two inequality} on the second Neumann eigenvalue in \cite{Brasco=GAFA}. 
We also refer the interested readers to \cite{Brasco=Jfa,Brasco=Duke,Brasco=GAFA,Maggi=Pisa,FuscoZhang-CVPDE} for the quantitative estimates of other types of eigenvalues for the Laplace operator or $p$-Laplace operator.\vspace{0.5em}

The paper is organized as follows. In Section 2, we give some preliminary results that we will use throughout the paper. Then in Section 3, we will show how to get a quantitative form of the Freitas-Laugesen inequality \eqref{Laugesen} and hence establish Theorem \ref{main result one} and Corollary \ref{main result two}. Finally, the last section is devoted to the construction of a family of nearly spherical domains and then to prove the sharpness of the exponent 2 for the Fraenkel asymmetry in the quantitative  Freitas-Laugesen inequality \eqref{main result one inequality}.

	
\section{Preliminaries on the Robin eigenvalue problem on ball}
We recall some basic facts about the Robin eigenvalues of the Laplace operator on ball. Let $\mathbb{B}\subset\R^n$ be the unit ball from \cite{Laugesen=AJM}. The following two propositions give the first two Robin eigenvalues of $\mathbb{B}$ for every real boundary parameter $\alpha$.
\begin{proposition}[First Robin eigenfunction of the ball] \cite{Laugesen=AJM}\label{first}
	The first Robin eigenvalue of the unit ball $\mathbb{B}\subset\Rn$ is simple and the first eigenfunction is radial for each $\alpha\in\R$.
	\begin{itemize}
		\item If $\alpha<0$, then $\lambda_1(\mathbb{B} ; \alpha)<0$ and the eigenfunction $g(r)$ is positive and radially strictly increasing, with
		$$
		g(0)>0, \quad g^{\prime}(0)=0, \quad g^{\prime}(r)>0, \quad r \in(0,1) .
		$$
		\item  If $\alpha=0$, then $\lambda_1(\mathbb{B} ; 0)=0$, with constant eigenfunction $g(r) \equiv 1$.\vspace{0.2em}
		\item  If $\alpha>0$, then $\lambda_1(\mathbb{B} ; \alpha)>0$ and the eigenfunction $g(r)$ is positive and radially strictly decreasing, with
		$$
		g(0)>0, \quad g^{\prime}(0)=0, \quad g^{\prime}(r)<0, \quad r \in(0,1) .
		$$
	\end{itemize}
\end{proposition}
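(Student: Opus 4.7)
The plan is to combine the rotational symmetry of $\mathbb{B}$ with a direct analysis of the radial Robin ODE. First, for every $\alpha\in\R$ the Robin quadratic form is bounded below on $W^{1,2}(\mathbb{B})$ (thanks to the compact trace embedding $W^{1,2}(\mathbb{B})\hookrightarrow L^2(\partial\mathbb{B})$), and its minimum over the unit $L^2$-sphere is attained, yielding $\lambda_1(\mathbb{B};\alpha)$ together with a minimizer $g$. Since the form is invariant under $u\mapsto|u|$, one may assume $g\ge 0$, and then standard strong-maximum-principle and Hopf-lemma arguments applied to the shifted equation $(-\Delta+K)g=(\lambda_1+K)g$ for a sufficiently large constant $K$ show that $g>0$ on $\overline{\mathbb{B}}$. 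Simplicity follows in the usual way: a second linearly independent first eigenfunction would, suitably combined with $g$, produce a sign-changing first eigenfunction, contradicting positivity. Radiality is then forced by simplicity, because $g\circ\rho$ is again a positive first eigenfunction for every rotation $\rho\in O(n)$ and must therefore coincide with $g$.

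With radiality in hand, I would set $g=g(r)$ with $r=|x|$ and rewrite the eigenvalue equation in Sturm--Liouville form as
\begin{equation*}
-(r^{n-1}g'(r))'=\lambda_1(\mathbb{B};\alpha)\,r^{n-1}g(r),\qquad r\in(0,1),
\end{equation*}
with $g'(0)=0$ coming from regularity at the origin and $g'(1)+\alpha g(1)=0$ from the Robin condition on $\partial\mathbb{B}$. Integrating from $0$ to $r$ and using $g'(0)=0$ yields the key identity
\begin{equation*}
r^{n-1}g'(r)=-\lambda_1(\mathbb{B};\alpha)\int_0^r s^{n-1}g(s)\,ds,\qquad r\in(0,1),
\end{equation*}
so that, because $g>0$, the sign of $g'(r)$ on $(0,1)$ is precisely the opposite of the sign of $\lambda_1(\mathbb{B};\alpha)$. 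Evaluating at $r=1$ and combining with the Robin boundary condition gives
\begin{equation*}
\alpha\,g(1)=\lambda_1(\mathbb{B};\alpha)\int_0^1 s^{n-1}g(s)\,ds,
\end{equation*}
and since both $g(1)$ and the integral are strictly positive, this forces $\operatorname{sign}\lambda_1(\mathbb{B};\alpha)=\operatorname{sign}\alpha$, with both sides simultaneously zero.

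The three bulleted statements now follow immediately. If $\alpha<0$, then $\lambda_1(\mathbb{B};\alpha)<0$ and the displayed identity gives $g'(r)>0$ for all $r\in(0,1)$; if $\alpha>0$, then $\lambda_1(\mathbb{B};\alpha)>0$ and $g'(r)<0$; and if $\alpha=0$, then $g'\equiv0$, so $g$ is a positive constant that can be normalized to $g\equiv1$. In each case $g(0)=\lim_{r\to 0^+}g(r)>0$ by strict positivity and $g'(0)=0$ by the radial regularity at the origin. I expect the main technical hurdle to be the first step, namely establishing positivity and simplicity uniformly in $\alpha\in\R$: for very negative $\alpha$ one has $\lambda_1(\mathbb{B};\alpha)<0$, so the usual comparison arguments must be shifted by a large constant $K$ before the maximum principle can be applied, but no essentially new idea is required; everything after that is the short ODE computation above.
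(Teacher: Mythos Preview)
Your argument is correct. The paper itself does not prove this proposition: it is quoted verbatim as a preliminary result from Freitas and Laugesen \cite{Laugesen=AJM}, so there is no in-paper proof to compare against. Your outline supplies a clean self-contained justification: the variational/maximum-principle step for positivity and simplicity (with the shift $-\Delta+K$ to handle $\lambda_1<0$ when $\alpha<0$) is standard, radiality follows from simplicity plus $O(n)$-invariance, and the integrated Sturm--Liouville identity
\[
r^{n-1}g'(r)=-\lambda_1(\mathbb{B};\alpha)\int_0^r s^{n-1}g(s)\,ds
\]
immediately yields both the sign of $\lambda_1$ (via the boundary relation $g'(1)=-\alpha g(1)$) and the strict monotonicity of $g$. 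This is essentially how the result is proved in the cited source as well, so nothing further is needed.
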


\begin{proposition}[Second Robin eigenfunctions of the ball]\cite{Laugesen=AJM}\label{second}
The second Robin eigenfunctions of the unit ball $\mathbb{B}\subset\Rn$ have the form $g(r) x_i / r$ for $i=1, \ldots, n$. The radial part $g$ has $g(0)=$ $0, g^{\prime}(0)>0, g(r)>0$ for $r \in(0,1)$, and when $\alpha \leq 0$ it is strictly increasing, with $g^{\prime}(r)>0$.
\begin{itemize}
	\item If $\alpha<-1$, then	
	$$
	\lambda_2(\mathbb{B} ; \alpha)=\cdots=\lambda_{n+1}(\mathbb{B} ; \alpha)<0,
	$$	
	and $r g^{\prime}(r)+\alpha g(r)<0$ for $r \in(0,1)$.\vspace{0.2em}
	\item If $\alpha=-1$, then $g(r)=r$ and	
	$$
	\lambda_2(\mathbb{B} ;-1)=\cdots=\lambda_{n+1}(\mathbb{B} ;-1)=0 .
	$$\vspace{0.2em}
	\item If $\alpha>-1$, then	
	$$
	\lambda_2(\mathbb{B} ; \alpha)=\cdots=\lambda_{n+1}(\mathbb{B} ; \alpha)>0,
	$$	
	and $r g^{\prime}(r)+\alpha g(r)>0$ for $r \in(0,1)$.
\end{itemize}
\end{proposition}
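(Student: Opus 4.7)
My plan is to decompose eigenfunctions on the rotationally symmetric ball $\mathbb{B}$ into spherical harmonics and single out the $\ell = 1$ sector. Writing $u(x) = \sum_{\ell \geq 0, k} f_{\ell,k}(r) Y_{\ell,k}(\theta)$, with $\{Y_{\ell,k}\}$ an orthonormal basis of spherical harmonics on $S^{n-1}$ (of eigenvalue $\ell(\ell+n-2)$ for $-\Delta_{S^{n-1}}$), each radial factor $f = f_{\ell,k}$ solves
\[
-f'' - \frac{n-1}{r} f' + \frac{\ell(\ell+n-2)}{r^2} f = \lambda f, \quad r \in (0,1),
\]
with $f'(1) + \alpha f(1) = 0$ and, for $\ell \geq 1$, $f(0) = 0$ by regularity at the origin.

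The degree-one spherical harmonics form an $n$-dimensional space spanned by $\{x_i/r\}_{i=1}^n$, so if $g$ denotes the first positive radial eigenfunction of the $\ell = 1$ equation with eigenvalue $\mu_1(\alpha)$, then $\{g(r)\, x_i/r\}_{i=1}^n$ spans an $n$-dimensional eigenspace. To conclude $\lambda_2 = \cdots = \lambda_{n+1} = \mu_1(\alpha)$ I must verify that $\mu_1(\alpha)$ lies strictly between $\lambda_1(\mathbb{B};\alpha)$ and both the second radial eigenvalue for $\ell = 0$ and the first eigenvalue for every $\ell \geq 2$; the latter is immediate from monotonicity in $\ell$ of the repulsive term $\ell(\ell+n-2)/r^2$, while the former requires the explicit Bessel-function comparison of \cite{Laugesen=AJM}.

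The substitution $u(r) := g(r)/r$ transforms the $\ell = 1$ equation into
\[
-u'' - \frac{n+1}{r} u' = \lambda u, \quad r \in (0,1), \qquad u'(1) + (1+\alpha)\, u(1) = 0,
\]
which is exactly the first $\ell = 0$ radial Robin problem on the unit ball in $\R^{n+2}$ with parameter $\tilde\alpha := 1 + \alpha$. Applying Proposition~\ref{first} in dimension $n+2$ then yields positivity of $u$ on $[0,1]$, $u'(0) = 0$, and a sign for $u'$ on $(0,1)$ determined by $\tilde\alpha$: $u' > 0$ with $\lambda < 0$ if $\tilde\alpha < 0$; $u \equiv$ const with $\lambda = 0$ if $\tilde\alpha = 0$; $u' < 0$ with $\lambda > 0$ if $\tilde\alpha > 0$. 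Translating back via $g = r u$ immediately gives $g(0) = 0$, $g'(0) = u(0) > 0$, $g > 0$ on $(0,1]$, and $g(r) \equiv r$ when $\alpha = -1$; strict monotonicity of $g$ for $\alpha \leq 0$ follows from $g' = u + r u'$ together with the integral identity $r^{n+1} u'(r) = -\lambda \int_0^r s^{n+1} u(s)\, ds$. For $h(r) := r g'(r) + \alpha g(r)$, the key identity
\[
\frac{d}{dr}\bigl[r^{1+\alpha} u(r)\bigr] = r^{\alpha - 1} h(r)
\]
reduces the sign of $h$ to the sign of $\psi'$, where $\psi(r) := r^{1+\alpha} u(r)$; the Robin condition gives $\psi'(1) = 0$, and combining this with the endpoint behavior of $\psi$ at $r = 0$ and a Sturm-type monotonicity argument for the ODE satisfied by $\psi$ yields the claimed strict sign in each regime.

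The main obstacle is the eigenvalue-counting argument alluded to above: excluding the second radial $\ell = 0$ eigenvalue as a competitor to $\mu_1(\alpha)$ for $\lambda_2(\mathbb{B};\alpha)$ is classical for $\alpha \geq 0$, but in the negative Robin regime it demands a comparison of zeros of transcendental (Bessel and modified-Bessel) functions, as carried out in \cite{Laugesen=AJM}. A secondary technical point is the sign control of $h$ when $\alpha < -1$: there $\psi$ blows up at the origin, and the Sturm argument needs separate care at that endpoint.
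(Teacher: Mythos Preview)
The paper does not prove this proposition; it is quoted from \cite{Laugesen=AJM} as a preliminary fact, so there is no in-paper argument to compare against. Your outline is essentially the proof given by Freitas and Laugesen in that reference: the spherical-harmonic reduction together with the substitution $u(r)=g(r)/r$, which recasts the $\ell=1$ radial equation as the first radial Robin problem in dimension $n+2$ with shifted parameter $\tilde\alpha=\alpha+1$, is precisely their device, and the eigenvalue-counting step (ruling out the second $\ell=0$ radial eigenvalue as a competitor for $\lambda_2$) is indeed the delicate part they settle via explicit Bessel/modified-Bessel comparisons. Your identification of the two residual technical points---the Bessel comparison and the endpoint behaviour of $\psi$ when $\alpha<-1$---is accurate, and nothing in your sketch departs from their route.
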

Let $B=B_R(0)\subset\Rn$ be a ball of radius $R$. We restrict ourselves to the case $-R^{-1}<\alpha<0$ throughout this paper. A direct computation shows that the scaling relation for the Robin eigenvalue problem is 
$$
\lambda_k(\Omega;\alpha)=t^{-2}\lambda_k(t^{-1}\Omega;t\alpha),\quad t>0,\quad k\in\mathbb{N}_+.
$$
From here and the above two propositions, we have $$\lambda_1(B ; \alpha)< 0,\quad\lambda_2(B ; \alpha)> 0,\quad\text{for }\;-R^{-1}<\alpha<0.$$ Besides, the second Robin eigenfunctions of $B$ have the form $g(r)x_i/r$ for $i=1,\cdots,n$, and the expression of the radial part $g$ is
\begin{equation}\label{g}
	g(r)=r^{1-n/2}J_{n/2}\left(\sqrt{\lambda_2(\mathbb{B};R\alpha)}\frac{r}{R}\right),\quad\text{for }\;-R^{-1}<\alpha<0,
\end{equation}
where $J_{n/2}$ is the Bessel function of the first kind of order $n/2$. See \cite[page 983]{Laugesen=AJM} for the proof.

\section{Stability for the second Robin eigenvalue}
In this section, we recall the proof of the Freitas-Laugesen inequality \eqref{Laugesen} for $-R^{-1}<\alpha<0$ from \cite{Laugesen=AJM} and then establish the quantitative Freitas-Laugesen inequality \eqref{main result one inequality}. Let 
  $$
  R\coloneqq\left(\frac{|\Omega|}{\omega_n}\right)^{1/n},
  $$
where $\omega_n$ is the volume of unit ball in $\R^n$, so that $|\Omega|=|B_R(0)|$. For convenience, denote $B=B_R(0)$.
Recall the Rayleigh quotient
\begin{align}\label{Rayleigh}Q[u]=Q[u;\Omega,\alpha]=\frac{\int_{\Omega}|\nabla u|^2\,dx+\alpha\int_{\partial\Omega}u^2\,d\mathcal{H}^{n-1}}{\int_{\Omega} u^2\,dx},\quad u\in H^1(\Omega).\end{align}
Then it is well known that the following variational principle holds
\begin{align}\label{variation2}
\lambda_2(\Omega; \alpha)&=\min_{E\subset H^1(\Omega) \atop \text{subspace of dim}\, 2}\max_{u\in E\setminus\{0\}}Q[u;\Omega,\alpha]
=\min_{u\in H^1(\Omega)\setminus\{0\}\atop \int_{\Omega}uu_1\,dx=0}Q[u;\Omega,\alpha],
\end{align}
where $u_1\geq0$ is the eigenfunction for the first Robin eigenvalue $\lambda_1(\Omega;\alpha)$.
\vspace{0.5em}

	If $\lambda_2(\Omega;\alpha)\leq 0$, then there is nothing to prove since $\lambda_2(B;\alpha)>0$. Thus we may assume that $\lambda_2(\Omega;\alpha)>0$. Following \cite{Laugesen=AJM}, we define the test functions by
	\begin{equation}\label{test function}
		\varphi_{i}(x)\coloneqq g(r)\frac{x_i}{r},\quad i=1,\cdots,n,
	\end{equation}
where $g(r)$ is the radial part of the second Robin eigenfunction of the ball $B$, given by \eqref{g}. In particular, $g'(R)=-\alpha g(R)$. Now, we extend it outside the ball $B$ by
\begin{equation}\label{g extend}
	g(r)\coloneqq g(R)e^{-\alpha(r-R)},\quad\text{for}\; r\geq R.
\end{equation}
Then $g\in C^1$.
By the center of mass result \cite[Proposition 2]{Laugesen=AJM}, up to a translation of $\Omega$ we can assume that
\begin{equation}\label{orthogonal condition}
	\int_\Omega\varphi_i u_1\,dx=0,\quad\forall i=1,\cdots,n,
\end{equation}
where $u_1\geq0$ is the eigenfunction for the first Robin eigenvalue $\lambda_1(\Omega;\alpha)$.
Then by the variational principle \eqref{variation2}, we have
\begin{equation}
	\lambda_2(\Omega;\alpha)\int_\Omega\varphi_i^2\,dx\leq\int_\Omega|\nabla \varphi_i|^2\,dx+\alpha\int_{\partial \Omega}\varphi_i^2\,d\mathcal{H}^{n-1},\quad i=1,\cdots,n,
\end{equation}
with equality when $\Omega=B$ (Because Proposition \ref{second} says that $\varphi_i$'s are precisely the eigenfunctions of $\lambda_2(B;\alpha)$). Summing over $i$ and substituting the definition of $\varphi_i$ for $i=1,\cdots,n$, we have
\begin{equation}
	\lambda_2(\Omega;\alpha)\int_\Omega g(r)^2\,dx\leq\int_\Omega\left(g^\prime(r)^2+(n-1)r^{-2}g(r)^2\right) \,dx+\alpha\int_{\partial \Omega}g(r)^2\,d\mathcal{H}^{n-1}.
\end{equation}
By \cite[Proposition 1]{Laugesen=AJM} which says that $$\int_{\partial \Omega}g(r)^2\,d\mathcal{H}^{n-1}\geq \int_{\Omega}\left(2g(r)g'(r)+\frac{n-1}{r}g(r)^2\right)\,dx$$ and the fact that $\alpha<0$, we obtain
\begin{equation}\label{starting point}
	\lambda_2(\Omega;\alpha)\int_\Omega g(r)^2\,dx\leq\int_\Omega h(r)\,dx
\end{equation}
with equality when $\Omega=B$, where
\begin{equation}\label{h}
	h(r)\coloneqq g^\prime(r)^2+(n-1)r^{-2}g(r)^2+2\alpha g(r)g^\prime(r)+\alpha\frac{n-1}{r}g(r)^2.
\end{equation}
Since $-R^{-1}<\alpha<0$, it was proved in \cite[Lemma 8]{Laugesen=AJM} that $g(r)^2$ is strictly increasing and $h(r)$ is strictly decreasing for $0<r<\infty$, which can be done by Proposition \ref{second} and a direct derivative calculation. Then, since $|\Omega\setminus B|=|B\setminus \Omega|$, as proved in \cite[Proposition 3]{Laugesen=AJM}, we have
\begin{align}\label{gr2}
\int_{\Omega}g(r)^2dx&=\int_{\Omega\cap B}g(r)^2dx+\int_{\Omega\setminus B}g(r)^2dx
\nonumber\\
&\geq \int_{\Omega\cap B}g(r)^2dx+|\Omega\setminus B|g(R)^2\nonumber\\
&\geq \int_{\Omega\cap B}g(r)^2dx+\int_{B\setminus \Omega}g(r)^2dx=\int_{B}g(r)^2dx,
\end{align}
and similarly $$\int_{\Omega}h(r)\,dx\leq \int_{B}h(r)\,dx.$$
Therefore, combining these with \eqref{starting point} and $\lambda_2(\Omega;\alpha)>0$, there holds
\begin{equation}\label{use}
	\begin{aligned}
		\lambda_2(\Omega;\alpha)\int_{B}g(r)^2\,dx&\leq\lambda_2(\Omega;\alpha)\int_\Omega g(r)^2\,dx \leq\int_\Omega h(r)\,dx\\
		&\leq\int_{B} h(r)\,dx=\lambda_2(B;\alpha)\int_{B}g(r)^2\,dx,
	\end{aligned}
\end{equation}
from which we obtain the following isoperimetric inequality for the second Robin eigenvalue
\begin{equation}\label{target}
	\lambda_2(\Omega;\alpha)\leq \lambda_2(B;\alpha),\quad -R^{-1}<\alpha<0,
\end{equation}
with equality if and only if $\Omega=B$. This is precisely the proof of Theorem A for $\alpha\in (-R^{-1}, 0)$ in  \cite{Laugesen=AJM}. The proof of Theorem A for $\alpha\in \left(-\frac{n+1}{n}R^{-1},-R^{-1}\right]$ is different and we refer the interested readers to \cite{Laugesen=AJM} for details. \vspace{0.5em}

Now, we can establish the stability estimate for \eqref{target} in the spirit of \cite{Brasco=GAFA}. 
\begin{proof}[Proof of Theorem \ref{main result one}]
	Since one naturally has $\mathcal{A}(\Omega)\leq 2$, if $\lambda_2(\Omega;\alpha)\leq0$, then Theorem \ref{main result one} is trivial by taking $\gamma(n,\alpha,R)=\lambda_2(B;\alpha)/4>0$. Hence we only consider the case $\lambda_2(\Omega;\alpha)>0$. Our starting point is to rewrite the inequality \eqref{use} as
	\begin{equation}\label{use2}
		\lambda_2(B;\alpha)\int_{B}g(r)^2\,dx-\lambda_2(\Omega;\alpha)\int_{\Omega}g(r)^2\,dx\geq\int_{B} h(r)\,dx-\int_{\Omega} h(r)\,dx.
	\end{equation}
	Our goal is to refine this inequality. By the definition of the Fraenkel asymmetry, one has
	$$
	\mathcal{A}(\Omega)\leq\beta\coloneqq\frac{|\Omega\Delta B|}{|\Omega|}\leq 2.
	$$
	Let $B_1,$ $B_2$ be two balls, concentric with $B$, with radii $R_1\leq R\leq R_2$, such that
$$
	|\Omega\cap B|=\omega_n R_1^n=|B_1|\quad\text{and}\quad|\Omega\backslash B|=|B_2\backslash B|=\omega_n(R_2^n-R^n).
	$$
	Therefore, it follows from $|\Omega|=|B|$ that
	$$
	\frac{R^n-R_1^n}{R^n}=\frac{\beta}{2}=\frac{R_2^n-R^n}{R^n}.
	$$
	On the one hand, by \eqref{gr2} and \eqref{g}, we have
	$$
	\begin{aligned}
		\int_{\Omega}g(r)^2\,dx&\geq\int_{B}g(r)^2\,dx=\int_{B}r^{2-n}J_{n/2}\left(\sqrt{\lambda_2(\mathbb{B};R\alpha)}\frac{r}{R}\right)^2\,dx\\
		&=R^2\int_{B_1(0)}r^{2-n}J_{n/2}\left(\sqrt{\lambda_2(\mathbb{B};R\alpha)}r\right)^2\,dx\\
		&=|\Omega|^{2/n} \eta(n,\alpha,R),
	\end{aligned}
	$$
	where
	$$
	\eta(n,\alpha,R)\coloneqq\omega_n^{-2/n}\int_{B_1(0)}r^{2-n}J_{n/2}\left(\sqrt{\lambda_2(\mathbb{B};R\alpha)}r\right)^2\,dx
	$$
	is a positive constant which only depends on the dimension $n$, the volume of $\Omega$ and the Robin parameter $\alpha$. Therefore, we have
	\begin{equation}\label{final1}
		\lambda_2(B;\alpha)\int_{B}g(r)^2\,dx-\lambda_2(\Omega;\alpha)\int_{\Omega}g(r)^2\,dx\leq|\Omega|^{2/n} \eta(n,\alpha,R) \left(\lambda_2(B;\alpha)-\lambda_2(\Omega;\alpha)\right).
	\end{equation}
	On the other hand, since $h(r)$ is strictly decreasing in $(0,\infty)$, a similar argument as \eqref{gr2} implies
	$$
	\int_{\Omega}h(r)\,dx=\int_{\Omega\cap B}h(r)\,dx+\int_{\Omega\backslash B}h(r)\,dx\leq\int_{B_1}h(r)\,dx+\int_{B_2\backslash B}h(r)\,dx.
	$$
	Hence again by the monotone property of $h(r)$, we have
	\begin{equation}\label{use22}
		\begin{aligned}
			\int_{B}h(r)\,dx-\int_{\Omega}h(r)\,dx&\geq \int_{B\backslash B_1}h(r)\,dx-\int_{B_2\backslash B}h(r)\,dx\\
			&\geq\int_{B\backslash B_1}h(R)\,dx-\int_{B_2\backslash B}h(r)\,dx\\
			&=\int_{B_2\backslash B}\left(h(R)-h(r)\right)\,dx\\
			&=n\omega_n\int_{R}^{R_2}\left(h(R)-h(r)\right)r^{n-1}\,dr.
		\end{aligned}
	\end{equation}
	where we have used the fact $|B\backslash B_1|=|B_2\backslash B|$. Notice from \eqref{g extend} that
	$$
	\begin{aligned}
		h(r)&= g^\prime(r)^2+(n-1)r^{-2}g(r)^2+2\alpha g(r)g^\prime(r)+\alpha\frac{n-1}{r}g(r)^2\\
		&=g(R)^2\left(-\alpha^2+(n-1)\frac{1+\alpha r}{r^2}\right)e^{-2\alpha(r-R)},\quad\forall r\geq R.
	\end{aligned}
	$$
	The next lemma is crucial for the proof.
\begin{lemma}\label{keypoint}
	For $-R^{-1}<\alpha<0$, there holds
	\begin{equation}\label{keypoint=equation}
		\int_{R}^{R_2}\left(h(R)-h(r)\right)r^{n-1}\,dr\geq\frac{7(n-1)R^{n-2}}{16^2n^2}g(R)^2\beta^2.
	\end{equation}
\end{lemma}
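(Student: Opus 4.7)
The plan is a two-step estimate: first produce a pointwise lower bound for $-h'(r)$ on $[R,R_2]$, integrate it to control $h(R)-h(r)$ from below, and then integrate once more against $r^{n-1}$, converting the remaining geometric factor $R_2-R$ into the correct power of $\beta$ via the volume relation $R_2^n-R^n=R^n\beta/2$.

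Substituting $g(r)=g(R)e^{-\alpha(r-R)}$ into the explicit form of $h$ and differentiating gives, after a direct manipulation,
\[
-h'(r)=g(r)^2\left\{-2\alpha^3+\frac{(n-1)\bigl[2+3\alpha r+2(\alpha r)^2\bigr]}{r^3}\right\},\qquad r\geq R.
\]
The crucial algebraic observation is that the quadratic $t\mapsto 2+3t+2t^2$ has discriminant $9-16=-7<0$ and attains its minimum $7/8$ at $t=-3/4$; this is exactly where the numerator $7$ in the target constant comes from. Combined with $-2\alpha^3\geq 0$ and $g(r)\geq g(R)$ on $[R,R_2]$ (both using $\alpha<0$), this yields $-h'(r)\geq \tfrac{7(n-1)g(R)^2}{8\,r^3}$, and integrating from $R$ to $r$ produces
\[
h(R)-h(r)\;\geq\;\frac{7(n-1)g(R)^2}{16\,R^2}\cdot\frac{r^2-R^2}{r^2},\qquad r\in[R,R_2].
\]

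Multiplying by $r^{n-1}$ and integrating on $[R,R_2]$ reduces the claim to a lower bound on $\int_R^{R_2}(r^2-R^2)r^{n-3}\,dr$. For $n\geq 3$ I would use the elementary inequality $(r+R)r^{n-3}\geq 2R^{n-2}$ on $[R,R_2]$ to obtain $\int_R^{R_2}(r^2-R^2)r^{n-3}\,dr\geq R^{n-2}(R_2-R)^2$. For $n=2$ the integral equals $R^2\bigl[\beta/4-\tfrac{1}{2}\ln(1+\beta/2)\bigr]$, and a short monotonicity argument (its derivative in $\beta$ dominates $\beta/16$ on $[0,2]$) gives a lower bound of order $R^2\beta^2/32$, which more than suffices. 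The remaining geometric input $(R_2-R)^2\geq R^2\beta^2/(16 n^2)$ follows from the volume relation together with the elementary inequality $(1+x)^{1/n}-1\geq x/(2n)$ for $x\in[0,1]$ and $n\geq 2$, which in turn reduces to verifying $(1+x)^{1/n-1}\geq 1/2$ on this range. Chaining these estimates delivers the claimed constant $7(n-1)/(16^2 n^2)$.

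The hardest part will be the algebraic step: without the discriminant observation ensuring $2+3(\alpha r)+2(\alpha r)^2\geq 7/8$ uniformly, one only gets a pointwise lower bound for $-h'$ that may degenerate and loses the clean constant $7$. A secondary, purely technical, hurdle is the degeneracy of $r^{n-3}$ at $n=2$, where the neat two-term estimate $(r+R)r^{n-3}\geq 2R^{n-2}$ fails and one has to fall back on the explicit integration above. Finally, the hypothesis $-R^{-1}<\alpha<0$ is used twice, both essentially: to ensure $-2\alpha^3\geq 0$ and to guarantee $g(r)\geq g(R)$ on $[R,R_2]$ via the exponential extension of $g$.
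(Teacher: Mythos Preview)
Your proposal is correct and follows the same overall strategy as the paper: obtain the pointwise bound $-h'(r)\geq \tfrac{7(n-1)}{8}g(R)^2 r^{-3}$ from the quadratic $2+3t+2t^2\geq 7/8$ (the paper writes this as completing the square $(\alpha r+3/4)^2+7/16$), integrate once to get $h(R)-h(r)\geq \tfrac{7(n-1)}{16}g(R)^2(R^{-2}-r^{-2})$, and then integrate against $r^{n-1}$.

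The only substantive variation is in the last step for $n\geq 3$. The paper evaluates $\int_R^{R_2}(R^{-2}-r^{-2})r^{n-1}\,dr$ exactly and then invokes the second-order inequality $(1+x)^{(n-2)/n}\leq 1+\tfrac{n-2}{n}x-\tfrac{n-2}{4n^2}x^2$ to extract the $\beta^2$. You instead bound the integrand directly via $(r+R)r^{n-3}\geq 2R^{n-2}$ and then convert $(R_2-R)^2$ into $\beta^2$ using $(1+x)^{1/n}-1\geq x/(2n)$. Both arrive at exactly the stated constant; your route is arguably more elementary since it avoids the Taylor-type inequality. For $n=2$ your monotonicity argument is equivalent to the paper's use of $\log(1+x)\leq x-x^2/8$ (and in fact gives a factor of $2$ to spare). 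One minor inaccuracy: in your closing remark, only $\alpha<0$ is actually used in this lemma; the lower bound $\alpha>-R^{-1}$ plays no role here.
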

\begin{proof}[Proof of Lemma \ref{keypoint}]
	Let $w(r)\coloneqq h(R)-h(r)$ for $r\geq R$. By a direct derivative calculation, we have
	$$
	\begin{aligned}
		w^\prime(r)&=-h^\prime(r)=-g(R)^2\left(2\alpha^3-\frac{2(n-1)}{r^3}\left(\alpha^2r^2+\frac{3}{2}\alpha r+1\right)\right)e^{-2\alpha(r-R)}\\
		&=g(R)^2\left(2|\alpha|^3+\frac{2(n-1)}{r^3}\left[\left(\alpha r+\frac{3}{4}\right)^2+\frac{7}{16}\right]\right)e^{2|\alpha|(r-R)}\\
		&\geq \frac{7(n-1)}{8}g(R)^2\frac{1}{r^3},\quad \forall\,r\geq R,
	\end{aligned}
	$$
	from which we know
	$$
	\begin{aligned}
		h(R)-h(r)&=w(r)-w(R)=\int_R^r w^\prime(s)\,ds\\
		&\geq \frac{7(n-1)}{8}g(R)^2\int_R^r  \frac{1}{s^3}\,ds\\
		&=\frac{7(n-1)}{16}g(R)^2\left(\frac{1}{R^2}-\frac{1}{r^2}\right),\quad \forall\,r\geq R.
	\end{aligned}
	$$
	Therefore, we have
	\begin{equation}\nonumber
		\begin{aligned}
			I&\coloneqq\int_{R}^{R_2}\left(h(R)-h(r)\right)r^{n-1}\,dr\geq \frac{7(n-1)}{16}g(R)^2\int_R^{R_2}\left(\frac{1}{R^2}-\frac{1}{r^2}\right)r^{n-1}\,dr\\
			&=\begin{cases}
				\frac{7(n-1)}{16}g(R)^2\left[\frac{1}{nR^2}\left(R_2^n-R^n\right)-\frac{1}{n-2}\left(R_2^{n-2}-R^{n-2}\right)\right] \quad &\hbox{if}\ n\geq 3, \\
					\frac{7(n-1)}{16}g(R)^2\left[\frac{1}{2R^2}\left(R_2^2-R^2\right)-\log \frac{R_2}{R}\right] \quad &\hbox{if}\ n=2. \\
			\end{cases}
		\end{aligned}
	\end{equation}
Before proceeding any further, one should keep in mind that
$$
R_2=R\left(1+\frac{\beta}{2}\right)^{1/n}\quad\text{and}\quad R_2^n-R^n=R^n\frac{\beta}{2}.
$$
For $n\geq3$, by $0\leq\frac{\beta}{2}\leq 1$ and the elementary inequality
$$
(1+x)^\frac{n-2}{n}\leq 1+\frac{n-2}{n}x-\frac{n-2}{4n^2}x^2,\quad\forall\,x\in [0,1],
$$
we have
$$
\begin{aligned}
	R_2^{n-2}-R^{n-2}&=R^{n-2}\left(\left(1+\frac{\beta}{2}\right)^{\frac{n-2}{n}}-1\right)\\
	&\leq R^{n-2}\left(\frac{n-2}{2n}\beta-\frac{n-2}{16n^2}\beta^2\right).
\end{aligned}
$$
Then, one obtains that
$$
\begin{aligned}
	I&\geq \frac{7(n-1)}{16}g(R)^2\left[\frac{1}{nR^2}\left(R_2^n-R^n\right)-\frac{1}{n-2}\left(R_2^{n-2}-R^{n-2}\right)\right]\\
	&\geq \frac{7(n-1)}{16}g(R)^2\left[\frac{1}{nR^2}R^n\frac{\beta}{2}-\frac{1}{n-2}R^{n-2}\left(\frac{n-2}{2n}\beta-\frac{n-2}{16n^2}\beta^2\right)\right]\\
	&= \frac{7(n-1)}{16^2n^2}g(R)^2 R^{n-2}\beta^2.
\end{aligned}
$$
Similarly, for $n=2$, by $0\leq\frac{\beta}{2}\leq 1$ and the elementary inequality
$$
\log(1+x)\leq x-\frac{1}{8}x^2,\quad\forall\,x\in [0,1],
$$
we have
$$
\log\frac{R_2}{R}=\frac{1}{2}\log\left(1+\frac{\beta}{2}\right)\leq \frac{1}{2}\left(\frac{\beta}{2}-\frac{1}{32}\beta^2\right)=\frac{\beta}{4}-\frac{1}{64}\beta^2,
$$
from which we get
$$
\begin{aligned}
	I&\geq \frac{7(n-1)}{16}g(R)^2\left[\frac{1}{2R^2}\left(R_2^2-R^2\right)-\log \frac{R_2}{R}\right]\\
	&\geq \frac{7(n-1)}{16}g(R)^2\left[\frac{\beta}{4}-\left(\frac{\beta}{4}-\frac{1}{64}\beta^2\right)\right]\\
	&= \frac{7(n-1)}{16^2n^2}g(R)^2 R^{n-2}\beta^2.
\end{aligned}
$$
The proof is complete.
\end{proof}
Now we come back to the proof of Theorem \ref{main result one}. By \eqref{use2}, \eqref{final1}, \eqref{use22} and Lemma \ref{keypoint}, we have proved that
	\begin{equation}\label{final}
		\begin{aligned}
			&\quad|\Omega|^{2/n} \eta(n,\alpha,R) \left(\lambda_2(B;\alpha)-\lambda_2(\Omega;\alpha)\right)\\
			&\geq \omega_n\frac{7(n-1)R^{n-2}}{16^2n}g(R)^2\beta^2= \omega_nJ_{n/2}(\sqrt{\lambda_2(\mathbb{B};R\alpha)})^2\frac{7(n-1)}{16^2n}\beta^2.
		\end{aligned}
	\end{equation}
	Therefore, when $-R^{-1}< \alpha<0$, one obtains
	\begin{equation}\label{conclusion}
		\begin{aligned}
			&\quad \lambda_2(B;\alpha)-\lambda_2(\Omega;\alpha)\\
			&\geq\eta(n,\alpha,R)^{-1}|\Omega|^{-2/n}\omega_nJ_{n/2}(\sqrt{\lambda_2(\mathbb{B};R\alpha)})^2\frac{7(n-1)}{16^2n}\mathcal{A}(\Omega)^2\\
			&\eqqcolon \gamma(n,\alpha,R)\mathcal{A}(\Omega)^2.
		\end{aligned}
	\end{equation}
The proof is complete.
\end{proof}
\begin{proof}[Proof of Corollary \ref{main result two}]
Recall that the Robin eigenvalue $\lambda_k(\Omega;\alpha)$ is continuous as a function of the boundary parameter $\alpha$, so
$$
\mu_2(\Omega)=\lambda_2(\Omega;0)=\lim_{\alpha\to 0}\lambda_2(\Omega;\alpha).
$$
Then by letting $\alpha\to 0^-$ in \eqref{conclusion} and using
$$
\eta(n,\alpha,R)=\omega_n^{-2/n}\int_{B_1(0)}r^{2-n}J_{n/2}\left(\sqrt{\lambda_2(\mathbb{B};R\alpha)}r\right)^2\,dx,
$$
we conclude from the continuity property of the Bessel function that
$$
\mu_2(B)-\mu_2(\Omega)\geq \delta(n)|\Omega|^{-2/n}\mathcal{A}(\Omega)^2,
$$
where
$$
\delta(n):=\frac{7(n-1)}{16^2n}\omega_n^{(n+2)/n}J_{n/2}(\sqrt{\mu_2(\mathbb{B}}))^2\left(\int_{B_1(0)}r^{2-n}J_{n/2}\left(\sqrt{\mu_2(\mathbb{B})}r\right)^2\,dx\right)^{-1}
$$
is a positive constant depending only on $n$ since $\mu_2(\mathbb{B})=\xi^2_{n/2,1}$, where $\xi_{n/2,1}$ denotes the first positive zero of the derivative of $t\mapsto t^{1-n/2}J_{n/2}(t)$. This proves the quantitative Szeg\"o-Weinberger inequality \eqref{main result two inequality}.
\end{proof}

\section{Sharpness of the quantitative Freitas-Laugesen inequality}
In this section, we will show that the exponent 2 in the quantitative Freitas-Laugesen inequality \eqref{main result one inequality} is sharp. Since the second Robin eigenvalue for a ball is not simple, one can not use the usual ellipsoids family to check the sharpness of the exponent 2. Here, we mainly follow the approach from \cite{Brasco=Jfa,Brasco=GAFA} to construct a family of nearly spherical domains to verify the sharpness of exponent 2. Such domains were used to check the sharpness of exponent 2 in the quantitative Szeg\"o-Weinberger inequality \cite{Brasco=GAFA} and quantitative Brock-Weinstock inequality \cite{Brasco=Jfa}.\vspace{0.5em} 

For our problem, compared to the Neumann case \cite{Brasco=GAFA} and Steklov case \cite{Brasco=Jfa}, since the first Robin eigenfunction is not constant, some extra difficulties will arise. Therefore, the nearly spherical domains we construct later need some extra symmetric property. Since the whole proof is quite complicated, we will divide this section into various subsections to facilitate better readability. In the sequel, we will use $K, C, C_1,\cdots$ to denote positive constants that are  independent of $\e$ but may be different in different places.

\subsection{Setting of the construction and basic properties}
Throughout this section, let $B\subset\Rn$ be the unit ball centered at the origin and $-1<\alpha<0$. As in \cite{Brasco=Jfa,Brasco=GAFA}, we consider a general nearly spherical domain given by
\begin{equation}\label{omega epsilon}
	\Omega_\varepsilon\coloneqq\{x\in\Rn:x=0\text{ or }|x|<1+\varepsilon\psi(x/|x|)\},\quad0<\varepsilon\ll 1,
\end{equation}
where $\psi\in C^\infty(\partial B)$ is a smooth function on the unit sphere $\partial B$. The function $\psi$ is selected to satisfy the following assumptions, which are crucial for the proof.
\begin{equation}\label{assumption1}
	\int_{\partial B}\psi(x)\,d\mathcal{H}^{n-1}=0,
\end{equation}
\begin{equation}\label{assumption2}
	\int_{\partial B}\langle a,x\rangle\psi(x)\,d\mathcal{H}^{n-1}=0,\quad\text{for every }a\in\Rn,
\end{equation}
\begin{equation}\label{assumption3}
	\int_{\partial B}\langle a,x\rangle^2\psi(x)\,d\mathcal{H}^{n-1}=0,\quad\text{for every }a\in\Rn.
\end{equation}
\begin{remark}\label{assumption meaning}
	The assumptions \eqref{assumption1}, \eqref{assumption2} and \eqref{assumption3} are equivalent to require that $\psi$ is orthogonal in the $L^2(\partial B)$ sense to the first three eigenspaces of the Laplace-Beltrami operator on $\partial B$, i.e., to spherical harmonics of order $0$, $1$ and $2$, respectively, from which we know such function $\psi$ does exist. We also refer the interested readers to \cite{Muller=book} for a comprehensive introduction on spherical harmonics.
\end{remark}
\begin{remark}\label{assumption orthogonal}
	By Proposition \ref{second}, we know every Robin eigenfunction $\xi$ relative to $\lambda_2(B;\alpha)$ has the form
	$$
	\xi(x)=g(r)\frac{\langle a,x\rangle}{r}
	$$
	for some $a\in\R^n$. In particular, $\xi(x)=g(1)\langle a,x\rangle$ for $x\in\partial B$. Therefore, it follows from \eqref{assumption1}, \eqref{assumption2} and \eqref{assumption3} that
	\begin{equation}\label{assumption orthogonal equation}
		\int_{\partial B}\psi(x) \xi(x)^2\,d\Hn=0\quad\text{and}\quad\int_{\partial B}\psi(x)|\nabla_{\rm tan}\xi(x)|^2\,d\Hn=0,
	\end{equation}
	where $\nabla_{\rm tan}$ is the tangential gradient. This relation is very crucial for improving the decay rate in later proofs.
\end{remark}
The following geometric result established in \cite{Brasco=Jfa} is the starting point.
\begin{lemma}\cite[Lemma 6.2]{Brasco=Jfa}\label{fraenkel asymmetry}
	Let $\psi\in C^\infty(\partial B)$ satisfy \eqref{assumption1} and \eqref{assumption2}, then
	\begin{equation}
		|\Omega_\varepsilon|-|B|\simeq \varepsilon^2\quad\text{and}\quad\mathcal{A}(\Omega_\varepsilon)\simeq\varepsilon\simeq\frac{|\Omega_\varepsilon\Delta B|}{|\Omega_\varepsilon|}.
	\end{equation}
	As a direct corollary, there exists a positive constant $K>0$ independent of $\varepsilon$, such that
	\begin{equation}\label{volume estimate}
		|\Omega_\varepsilon\backslash B|\leq K\varepsilon\quad\text{and}\quad |B\backslash\Omega_\varepsilon|\leq K\varepsilon.
	\end{equation}
\end{lemma}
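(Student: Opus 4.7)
My approach rests on the polar-coordinate representation: a radial region of the form $\{r\theta : 0 < r < \rho(\theta),\ \theta\in\partial B\}$ has Lebesgue measure $\tfrac{1}{n}\int_{\partial B}\rho(\theta)^n\,d\Hn(\theta)$. Since $\psi\in L^\infty(\partial B)$, the Taylor expansion $(1+\varepsilon\psi)^n=1+n\varepsilon\psi+\binom{n}{2}\varepsilon^2\psi^2+O(\varepsilon^3)$ holds uniformly on $\partial B$ for small $\varepsilon$. Applying this to $|\Omega_\varepsilon|=\tfrac{1}{n}\int_{\partial B}(1+\varepsilon\psi)^n\,d\Hn$ and using \eqref{assumption1} to annihilate the $O(\varepsilon)$ term yields
$$|\Omega_\varepsilon|-|B|=\tfrac{n-1}{2}\varepsilon^2\|\psi\|_{L^2(\partial B)}^2+O(\varepsilon^3)\simeq\varepsilon^2,$$
the lower bound holding because $\psi\not\equiv 0$.

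Next I would compute $|\Omega_\varepsilon\Delta B|$ directly. The radial fibres of $\Omega_\varepsilon\setminus B$ lie over $\{\psi>0\}$ and those of $B\setminus\Omega_\varepsilon$ over $\{\psi<0\}$, so
$$|\Omega_\varepsilon\setminus B|=\tfrac{1}{n}\int_{\{\psi>0\}}[(1+\varepsilon\psi)^n-1]\,d\Hn=\varepsilon\int_{\{\psi>0\}}\psi\,d\Hn+O(\varepsilon^2),$$
and analogously $|B\setminus\Omega_\varepsilon|=-\varepsilon\int_{\{\psi<0\}}\psi\,d\Hn+O(\varepsilon^2)$. By \eqref{assumption1}, both equal $\tfrac{\varepsilon}{2}\|\psi\|_{L^1(\partial B)}+O(\varepsilon^2)\simeq\varepsilon$. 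This establishes the corollary bounds \eqref{volume estimate} and the relation $|\Omega_\varepsilon\Delta B|/|\Omega_\varepsilon|\simeq\varepsilon$. The upper half of $\mathcal{A}(\Omega_\varepsilon)\simeq\varepsilon$ is then immediate: the concentric ball $B_{R_\varepsilon}$ of radius $R_\varepsilon=(|\Omega_\varepsilon|/\omega_n)^{1/n}=1+O(\varepsilon^2)$ satisfies $|B\Delta B_{R_\varepsilon}|=O(\varepsilon^2)$, and the triangle inequality for symmetric differences gives $|\Omega_\varepsilon\Delta B_{R_\varepsilon}|\leq|\Omega_\varepsilon\Delta B|+|B\Delta B_{R_\varepsilon}|\leq C\varepsilon$.

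The matching lower bound is the principal obstacle of the argument. For any competitor $B'=B_{R_\varepsilon}(x_0)$ I would dichotomize on $|x_0|$. If $|x_0|\geq M\varepsilon$ for a sufficiently large $M$ (competitors with $|x_0|$ of order $1$ or larger being trivial), then $|B\Delta B'|\geq c|x_0|\geq cM\varepsilon$, and
$$|\Omega_\varepsilon\Delta B'|\geq|B\Delta B'|-|\Omega_\varepsilon\Delta B|\geq(cM-C)\varepsilon\gtrsim\varepsilon.$$
If instead $|x_0|\leq M\varepsilon$, I would parametrize $\partial B'$ as the radial graph $r=\rho_{x_0}(\theta)=1+x_0\cdot\theta+O(\varepsilon^2)$, from which a direct pointwise expansion gives $\tfrac{1}{n}|(1+\varepsilon\psi)^n-\rho_{x_0}^n|=|\varepsilon\psi(\theta)-x_0\cdot\theta|+O(\varepsilon^2)$, and hence
$$|\Omega_\varepsilon\Delta B'|=\varepsilon\int_{\partial B}|\psi(\theta)-a\cdot\theta|\,d\Hn+O(\varepsilon^2),\qquad a:=x_0/\varepsilon,\ |a|\leq M.$$
The crucial input of \eqref{assumption2} enters precisely here: $\psi$ is $L^2$-orthogonal on $\partial B$ to every linear function $\theta\mapsto a\cdot\theta$, so (as $\psi\not\equiv 0$) $\psi$ cannot coincide with any such linear function. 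By continuity of $a\mapsto\|\psi-a\cdot\theta\|_{L^1(\partial B)}$ and compactness of $\{|a|\leq M\}$, the infimum is strictly positive, forcing $|\Omega_\varepsilon\Delta B'|\gtrsim\varepsilon$. The heart of the difficulty lies exactly in this uniform positivity: without orthogonality to the linear functions, a suitable shift of the competitor ball could cancel the linear-in-$\varepsilon$ deviation and destroy the sharp lower bound.
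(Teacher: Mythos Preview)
The paper does not supply its own proof of this lemma; it is quoted verbatim from \cite[Lemma~6.2]{Brasco=Jfa} and used as a black box. Your argument is therefore not competing against anything in the present paper, and it is essentially correct: the polar-coordinate volume formula together with the Taylor expansion and assumption \eqref{assumption1} immediately gives $|\Omega_\varepsilon|-|B|\simeq\varepsilon^2$ and $|\Omega_\varepsilon\Delta B|\simeq\varepsilon$, while the lower bound on $\mathcal{A}(\Omega_\varepsilon)$ via the dichotomy on $|x_0|$ and the radial-graph expansion of the shifted ball is the standard route (and is, in outline, the argument in the cited reference).

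One small point worth making explicit in your write-up: in the regime $|x_0|\ge M\varepsilon$ you should split further into $|x_0|\ge c_0$ (where $|\Omega_\varepsilon\Delta B'|$ is bounded below by a fixed positive constant) and $M\varepsilon\le|x_0|\le c_0$ (where $|B\Delta B'|\simeq|x_0|$ holds with uniform constants), since the linear estimate $|B\Delta B'|\ge c|x_0|$ is only valid for $|x_0|$ below a fixed threshold. Also, the $O(\varepsilon^2)$ errors in the final case depend on $M$, so the order of choices is: fix $M$ from the middle case, then choose $\varepsilon_0$ small depending on $M$. With these clarifications your proof is complete.
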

One can see later that we will use the extension of the second Robin eigenfunction on $\Omega_\varepsilon$ as a test function for $\lambda_2(B;\alpha)$. This is the most different part from \cite{Brasco=Jfa,Brasco=GAFA} since we do not know the connection between the first Robin eigenfunctions on $\Omega_\varepsilon$ and $B$, while in the Neumann case  \cite{Brasco=GAFA} and Steklov case  \cite{Brasco=Jfa}, the first eigenfunctions on $\Omega_\e$ and $B$ are both simply constants. For this reason, we need the following extra assumption on $\psi$.\vspace{0.5em}

 In what follows, except for conditions \eqref{assumption1}, \eqref{assumption2} and \eqref{assumption3}, we also assume that the domains $\{\Omega_\varepsilon\}_{\varepsilon>0}$ are symmetric in $x_i$-direction for all $i=1,\cdots,n$, i.e., if $(x_1,\cdots,x_i,\cdots,x_n)\in\Omega_\e$, then $(x_1,\cdots,-x_i,\cdots,x_n)\in\Omega_\e$. These four assumptions are compatible in the sense that such $\psi$ does exist. For instance, when $n=2$, we can take
$$
\psi:[0,2\pi]\to\R,\quad \theta\mapsto \cos 4\theta.
$$
The following lemma concerning the anti-symmetry of some second Robin eigenfunction is the keypoint for the integral estimate throughout this section.
\begin{lemma}\label{antisymmetry}
	Assume that $\Omega\subset\Rn$ is a smooth bounded domain which is symmetric in $x_i$-direction for all $i=1,\cdots,n$, i.e., $(x_1,\cdots,x_i,\cdots,x_n)\in\Omega$ implies $(x_1,\cdots,-x_i,\cdots,x_n)\in\Omega$. Then there exists $i_0\in\mathbb{N}_+$, $1\leq i_0\leq n$, such that there exists a non-trivial eigenfunction relative to $\lambda_2(\Omega;\alpha)$ which is anti-symmetric in $x_{i_0}$-direction, i.e.,
	$$u(x_1,\cdots,-x_{i_0},\cdots, x_n)=-u(x_1,\cdots,x_{i_0},\cdots, x_n).$$
\end{lemma}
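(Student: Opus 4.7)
I would exploit the $(\mathbb{Z}/2\mathbb{Z})^n$-symmetry of $\Omega$. Let $G:=\langle\sigma_1,\ldots,\sigma_n\rangle$ where $\sigma_i$ is the reflection $(x_1,\ldots,-x_i,\ldots,x_n)$. Since each $\sigma_i$ is an isometry of $\Omega$ by hypothesis, the pullback $R_{\sigma_i}u:=u\circ\sigma_i$ is a unitary operator on $L^2(\Omega)$ that commutes with the Robin Laplacian (both the PDE in \eqref{robin problem} and the boundary condition are $\sigma_i$-invariant since $\alpha$ is constant and the outer normal transforms covariantly). Consequently every eigenspace is $G$-invariant, and the $\lambda_2(\Omega;\alpha)$-eigenspace $V$ decomposes as
\[
V=\bigoplus_{S\subseteq\{1,\ldots,n\}}V_S,\qquad V_S:=\{u\in V:\,R_{\sigma_i}u=\chi_S(\sigma_i)u\text{ for all }i\},
\]
where $\chi_S(\sigma_i)=-1$ if $i\in S$ and $+1$ otherwise runs over the $2^n$ characters of $G$. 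A non-zero element of $V_S$ with $i_0\in S$ is automatically anti-symmetric in $x_{i_0}$, so the lemma reduces to showing that $V\neq V_\emptyset$.

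\textbf{Spectrum by isotypic component.} Let $\mathcal{V}_S\subset W^{1,2}(\Omega)$ be the full $\chi_S$-isotypic subspace, which is closed and invariant under the Robin quadratic form; denote by $\mu_S^{(k)}$ the $k$-th eigenvalue of the restricted problem. By Proposition \ref{first}, the first eigenfunction $u_1>0$ is simple, hence by the $G$-symmetry of $\Omega$ it must be fully $G$-invariant: $u_1\in\mathcal{V}_\emptyset$ and $\mu_\emptyset^{(1)}=\lambda_1(\Omega;\alpha)$. Since the full Robin spectrum is the disjoint union (with multiplicities) of the restricted spectra, one has
\[
\lambda_2(\Omega;\alpha)=\min\Bigl\{\mu_\emptyset^{(2)},\,\min_{S\neq\emptyset}\mu_S^{(1)}\Bigr\},
\]
and thus $V\neq V_\emptyset$ is equivalent to $\min_{S\neq\emptyset}\mu_S^{(1)}\le\mu_\emptyset^{(2)}$, which I would argue by contradiction.

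\textbf{Trial functions and conclusion.} For each $j=1,\ldots,n$ the coordinate function $x_j\in\mathcal{V}_{\{j\}}\cap W^{1,2}(\Omega)$ satisfies $\int_\Omega x_j u_1\,dx=0$ automatically (odd vs.\ even under $\sigma_j$), so by the variational characterization of $\mu_{\{j\}}^{(1)}$,
\[
\mu_{\{j\}}^{(1)}\le Q[x_j;\Omega,\alpha]=\frac{|\Omega|+\alpha\int_{\partial\Omega}x_j^2\,d\mathcal{H}^{n-1}}{\int_\Omega x_j^2\,dx}.
\]
Combining the $n$ bounds above with an upper estimate of $\min_j Q[x_j]$ by $\mu_\emptyset^{(2)}$ yields $\min_j\mu_{\{j\}}^{(1)}\le\mu_\emptyset^{(2)}$, which forces $\lambda_2(\Omega;\alpha)=\mu_{\{j_0\}}^{(1)}$ for some $j_0$; the corresponding minimiser then lies in $\mathcal{V}_{\{j_0\}}$ and is anti-symmetric in $x_{j_0}$.

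\textbf{Main obstacle.} The delicate step is the comparison $\min_j Q[x_j]\le\mu_\emptyset^{(2)}$. I expect this to be where the proof is most subtle, since for a general symmetric $\Omega$ there is no free lunch comparing the second symmetric Robin eigenvalue to Rayleigh quotients of coordinate functions. In the concrete setting of the nearly spherical domains $\Omega_\varepsilon$ used later in the paper, the comparison follows by continuity of Robin eigenvalues under smooth domain perturbation: on the unit ball $\mathbb{B}$, Proposition \ref{second} gives $\mu_{\{j\}}^{(1)}(\mathbb{B})=\lambda_2(\mathbb{B};\alpha)$ for every $j$, while $\mu_\emptyset^{(2)}(\mathbb{B})$ is the next radial Robin eigenvalue of $\mathbb{B}$ (corresponding to a zero of the radial part with one more node), which is strictly greater than $\lambda_2(\mathbb{B};\alpha)$; hence the strict gap $\mu_{\{j\}}^{(1)}<\mu_\emptyset^{(2)}$ persists after small symmetric perturbation.
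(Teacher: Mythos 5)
Your setup is sound: the isotypic decomposition under the reflection group, the fact that the simple positive first eigenfunction is fully symmetric, and the identity $\lambda_2(\Omega;\alpha)=\min\{\mu_\emptyset^{(2)},\min_{S\neq\emptyset}\mu_S^{(1)}\}$ are all correct, and reducing the lemma to $\min_{S\neq\emptyset}\mu_S^{(1)}\le\mu_\emptyset^{(2)}$ is an accurate reformulation (it is equivalent to the paper's first reduction: if some second eigenfunction fails to be even in some direction, antisymmetrize it). But the proposal then has a genuine gap exactly at the step you flag yourself: the comparison $\min_j Q[x_j;\Omega,\alpha]\le\mu_\emptyset^{(2)}$ is never established, and the coordinate trial functions only give an upper bound on $\mu_{\{j\}}^{(1)}$ with no mechanism to compare it with the second fully symmetric eigenvalue of a general domain. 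Since that inequality \emph{is} the content of the lemma, the argument as written proves nothing beyond the reduction. The fallback via eigenvalue continuity on nearly spherical domains is not a proof of the lemma either: the statement is for an arbitrary smooth bounded domain symmetric in all coordinate directions, not only for small perturbations of the ball, and even in the perturbative setting you would still need a (not supplied) continuity argument for the symmetry-restricted Robin eigenvalues $\mu_S^{(1)}(\Omega_\varepsilon)$, $\mu_\emptyset^{(2)}(\Omega_\varepsilon)$ under the domain deformation, plus the identification of $\mu_\emptyset^{(2)}(\mathbb{B})$ with a higher ball eigenvalue.

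The paper closes this gap by a soft, non-variational argument that avoids any eigenvalue comparison between isotypic components. Suppose a second eigenfunction $u_2$ were even in every coordinate direction. Courant's nodal domain theorem gives exactly two nodal domains $\Omega_\pm$. By Filonov's inequality and $\alpha<0$ one has $\lambda_2(\Omega;\alpha)\le\mu_2(\Omega)<\lambda_1^D(\Omega)$, so the nodal set must reach $\partial\Omega$ (an interior nodal domain $\Omega'\subset\subset\Omega$ would force $\lambda_1^D(\Omega')\le\lambda_2(\Omega;\alpha)<\lambda_1^D(\Omega)$, contradicting domain monotonicity of the Dirichlet eigenvalue). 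On the other hand, the full symmetry of $u_2$ together with the connectedness of the two nodal domains (each meeting the fundamental orthant $\{x\in\Omega:x_i>0,\ i=1,\dots,n\}$) prevents the nodal set from touching the boundary, a contradiction. If you want to keep your representation-theoretic framework, you would have to replace the trial-function step by an argument of this nodal/topological type; as it stands, the key inequality is assumed rather than proved.
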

\begin{proof}
	Let $u_2\neq 0$ be a smooth second Robin eigenfunction on $\Omega$, i.e.,
	$$
	\begin{cases}
		-\Delta u_2=\lambda_2(\Omega;\alpha) u_2 \quad &\hbox{in}\ \Omega, \\
		\frac{\partial u_2}{\partial \nu}+\alpha u_2=0 \quad &\hbox{on}\ \partial\Omega. \\
	\end{cases}
	$$
	If $u_2$ is not symmetric in $x_{i_0}$-direction for some $1\leq i_0\leq n$, we can take
	$$
	v(x)=u_2(x_1,\cdots,x_{i_0},\cdots,x_n)-u_2(x_1,\cdots,-x_{i_0},\cdots,x_n),\quad x\in\Omega,
	$$
	then one can easily verify that $v$ is a non-trivial eigenfunction for $\lambda_2(\Omega;\alpha)$ and anti-symmetric in $x_{i_0}$-direction. So it suffices to prove that $u_2$ can not be symmetric in every $x_i$-direction.
	
Assume by contradiction that $u_2$ is symmetric in $x_i$-direction for all $1\leq i\leq n$, that is, $u_2(x_1,\cdots,x_{i},\cdots,x_n)=u_2(x_1,\cdots,-x_{i},\cdots,x_n)$ for any $x\in\Omega$. Courant's Nodal Domain theorem asserts that $u_2$ has exactly $2$ nodal domains $\Omega_\pm\coloneqq\{x\in\Omega:\operatorname{sgn}(u_2(x))=\pm 1\}$ since the first eigenfunction has constant sign. On the other hand, by denoting $\lambda_1^D(\Omega)$ the first Dirichlet eigenvalue on $\Omega$, it is known (cf. \cite{Filonov=comparison result} or \cite[page 123]{Brasco=GAFA}) that $\mu_2(\Omega)<\lambda_1^D(\Omega)$, 
so it follows from the variational principle \eqref{variation2} and $\alpha<0$ that
	$$
	\lambda_2(\Omega;\alpha)\leq\lambda_2(\Omega;0)=\mu_2(\Omega)<\lambda^D_1(\Omega),
	$$
Hence, the nodal set $\overline{\{x\in\Omega: u_2(x)=0\}}$ of $u_2$ must touch the boundary $\partial \Omega$. Otherwise, there exists $\Omega^\prime\subset\subset\Omega$ such that $u_2|_{\partial\Omega^\prime}\equiv0$. In fact, one can take $\Omega^\prime$ as an inner nodal domain of $u_2$. Then 
$$-\Delta u_2=\lambda_2(\Omega; \alpha)u_2\quad\text{in }\Omega',\quad u_2=0\quad\text{on }\partial\Omega',$$
so
	$$
	\lambda_1^D(\Omega^\prime)\leq \lambda_2(\Omega;\alpha)<\lambda^D_1(\Omega),
	$$
	which obviously contradicts to the fact $\lambda^D_1(\Omega)<\lambda^D_1(\Omega')$ because of $\Omega^\prime\subset\subset\Omega$. 
	
	Therefore, the nodal set $\overline{\{x\in\Omega: u_2(x)=0\}}$ of $u_2$ touches the boundary $\partial\Omega$.
	Let
	$$
	\Omega_1\coloneqq\{x\in\Omega:x_i>0,i=1,\cdots,n\},
	$$
	which is not empty by the symmetry of $\Omega$. Also, $\Omega$ can be viewed as a $2^n$ times copy of $\Omega_1$ in some sense. Since $u_2$ is symmetric with respect to $x_i$-direction for all $1\leq i\leq n$, we know both $\Omega_+\cap\Omega_1$ and $\Omega_-\cap\Omega_1$ are not empty. Then by the connectivity of $\Omega_+$ and $\Omega_-$, we know the nodal set can never touch the boundary, hence a contradiction again.
\end{proof}
Now, let us fix an eigenfunction $u_\varepsilon$ for $\lambda_2(\Omega_\varepsilon;\alpha)$, normalized in such a way that
\begin{equation}\label{normalized}
	\int_{\Oe}\ue(x)^2\,dx=1\quad\text{and}\quad\int_{\Oe}|\nabla \ue(x)|^2\,dx+\alpha\int_{\partial \Oe}\ue(x)^2\,d\Hn=\lambda_2(\Oe;\alpha).
\end{equation}
By Lemma \ref{antisymmetry}, we can assume without loss of generality that $\ue$ is anti-symmetric in $x_1$-direction, i.e., $$\ue(-x_1,x_2,\cdots,x_n)=-\ue(x_1,x_2,\cdots,x_n).$$
\begin{remark}[\cite{Brasco=Jfa, Brasco=GAFA}]\label{regularity estimate}
	Thanks to the fact that $\partial\Oe$ is of class $C^\infty$, we obtain that $\ue\in C^\infty(\overline{\Oe})$. Moreover, the domains $\Oe$ are uniformly of class $C^k$ for every $k\in\mathbb{N}$, hence we can assume the functions $\ue$ to satisfy uniform $C^k$ estimates, i.e.,
	\begin{equation}\label{regularity estimate equation}
		\|\ue\|_{C^k(\overline{\Oe})}\leq H_k,
	\end{equation}
	for some positive constants $H_k$ depending only on $k\in\mathbb{N}$. 
\end{remark}
Now, we consider a $C^{k_0}$ extension ${\widetilde{u}_{\varepsilon}}$ of $\ue$ to a larger set
\begin{equation}
	D_\varepsilon\coloneqq\{x\in\Rn:|x|\leq 1+\varepsilon\|\psi\|_{L^\infty(\partial B)}\}\supset\overline{B\cup\Oe}
\end{equation}
in such a way that
\begin{equation}\label{regularity estimate equation extension}
	\|{\widetilde{u}_{\varepsilon}}\|_{C^{k_0}(D_\varepsilon)}\leq K\|\ue\|_{C^{k_0}(\overline{\Oe})}
\end{equation}
for some positive constant $K$ independent of $\varepsilon$, see also \cite{Brasco=Jfa,Brasco=GAFA} for such kind of extensions. The value of ${k_0}\in \mathbb{N}_+$ will be given later. We will use this extension to construct a test function for $\lambda_2(B;\alpha)$ and then yield the following estimate.
\begin{lemma}\label{first estimate}
	Let $0<\varepsilon_0\ll1$, then there exist three functions $R_1,R_2,R_3:[0,\e_0]\to\R$ and a constant $C>0$ such that for every $0<\e<\e_0$, we have
	\begin{equation}\label{estimate for lambda2B}
		0<\lambda_2(B;\alpha)\leq\frac{\lambda_2(\Oe;\alpha)+R_1(\e)+R_2(\e)+C\e^2}{1+R_3(\e)-C\e^2}.
	\end{equation}
	Moreover, we have
	\begin{equation}\label{estimate for R123}
		|R_i(\varepsilon)|\leq C\varepsilon,\quad i=1,2,3,
	\end{equation}
	possibly with a different constant $C>0$ which is also independent of $\e$. Consequently, $\lambda_2(\Oe;\alpha)>0$ for $\e>0$ small.
\end{lemma}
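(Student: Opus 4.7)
The plan is to use the extended second eigenfunction $\widetilde{u}_\varepsilon$ as a trial function in the variational characterization \eqref{variation2} of $\lambda_2(B;\alpha)$, and then rewrite each integral over $B$ as the corresponding integral over $\Omega_\varepsilon$ plus a small remainder. First I would verify admissibility: by Lemma \ref{antisymmetry} we may take $u_\varepsilon$ anti-symmetric in $x_1$, and I would arrange the extension $\widetilde{u}_\varepsilon$ to preserve this anti-symmetry (e.g.\ by replacing the extension with its odd part in $x_1$, which still obeys the uniform $C^{k_0}$ bound \eqref{regularity estimate equation extension}). Since the first Robin eigenfunction on $B$ is radial by Proposition \ref{first}, hence even in $x_1$, the orthogonality condition in \eqref{variation2} holds automatically, giving
\[
\lambda_2(B;\alpha)\int_B \widetilde{u}_\varepsilon^2\,dx \;\le\; \int_B |\nabla \widetilde{u}_\varepsilon|^2\,dx + \alpha \int_{\partial B} \widetilde{u}_\varepsilon^2 \,d\mathcal{H}^{n-1}.
\]
The positivity $\lambda_2(B;\alpha) > 0$ asserted in \eqref{estimate for lambda2B} is immediate from Proposition \ref{second} since $\alpha > -1$.

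Next I would split each integral over $B$ into the corresponding integral over $\Omega_\varepsilon$ plus a correction supported on the thin shell $B \Delta \Omega_\varepsilon$. Concretely I set
\begin{align*}
R_1(\varepsilon) &:= \int_{B\setminus\Omega_\varepsilon} |\nabla \widetilde{u}_\varepsilon|^2\,dx - \int_{\Omega_\varepsilon\setminus B} |\nabla u_\varepsilon|^2\,dx, \\
R_3(\varepsilon) &:= \int_{B\setminus\Omega_\varepsilon} \widetilde{u}_\varepsilon^2\,dx - \int_{\Omega_\varepsilon\setminus B} u_\varepsilon^2\,dx,
\end{align*}
and let $R_2(\varepsilon)$ encode the gap between the two boundary integrals. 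The volume estimate $|B \Delta \Omega_\varepsilon| \le 2K\varepsilon$ from Lemma \ref{fraenkel asymmetry}, together with the uniform $C^{k_0}$ bounds from Remark \ref{regularity estimate} and \eqref{regularity estimate equation extension}, immediately forces $|R_1(\varepsilon)|, |R_3(\varepsilon)| \le C\varepsilon$. For the boundary term I would parametrize $\partial\Omega_\varepsilon$ by $\Phi(\sigma) := (1+\varepsilon\psi(\sigma))\sigma$, expand the pulled-back surface element as $J(\sigma) = 1 + (n-1)\varepsilon\psi(\sigma) + O(\varepsilon^2)$, and Taylor expand $u_\varepsilon(\Phi(\sigma))^2 = \widetilde{u}_\varepsilon(\sigma)^2 + \varepsilon \psi(\sigma) \partial_r(\widetilde{u}_\varepsilon^2)(\sigma) + O(\varepsilon^2)$ in the radial direction (justified by the uniform $C^2$ bound). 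The $O(\varepsilon)$ leading pieces get absorbed into $R_2(\varepsilon)$ with $|R_2(\varepsilon)| \le C\varepsilon$, while all quadratic Taylor remainders are gathered into a $C\varepsilon^2$ error. Assembling the three pieces with the normalization \eqref{normalized} then yields \eqref{estimate for lambda2B}; the positivity of $\lambda_2(\Omega_\varepsilon;\alpha)$ for small $\varepsilon$ follows upon rearranging, since the denominator $1 + R_3(\varepsilon) - C\varepsilon^2$ is close to $1$ and the numerator must be nonnegative because $\lambda_2(B;\alpha) > 0$.

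The main obstacle I anticipate is the boundary-term step, because $\partial B$ and $\partial\Omega_\varepsilon$ are distinct hypersurfaces: one must simultaneously expand the Jacobian $J(\sigma)$ and Taylor expand $\widetilde{u}_\varepsilon \circ \Phi$ in the radial direction, and choose $k_0$ large enough (say $k_0 \ge 2$) so that the quadratic remainders are genuinely uniform $O(\varepsilon^2)$. The shell integrals $R_1$ and $R_3$ are, by contrast, essentially trivial consequences of $\bigl|\int_E f\bigr| \le \|f\|_\infty |E|$ together with $|B \Delta \Omega_\varepsilon| \le C\varepsilon$.
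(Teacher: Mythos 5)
Your proposal is correct and follows essentially the same route as the paper: take the extended eigenfunction restricted to $B$ as a trial function in \eqref{variation2}, let $R_1,R_3$ be the shell corrections on $B\setminus\Omega_\varepsilon$ and $\Omega_\varepsilon\setminus B$ and $R_2$ the boundary gap, and bound all three by $C\varepsilon$ via the volume estimate \eqref{volume estimate}, the uniform $C^k$ bounds and the expansion of the Jacobian of $x\mapsto(1+\varepsilon\psi(x))x$; your only variant is enforcing orthogonality to the radial first eigenfunction by odd-symmetrizing the extension in $x_1$, whereas the paper subtracts the constant $\delta=\int_B\widetilde{u}_\varepsilon u_1\,dx=O(\varepsilon)$ — both rest on the same symmetry facts, and yours even removes the $O(\varepsilon^2)$ bookkeeping coming from $\delta$. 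One small touch-up: for the final claim, nonnegativity of the numerator is not enough; rearranging \eqref{estimate for lambda2B} gives $\lambda_2(\Omega_\varepsilon;\alpha)\ge\lambda_2(B;\alpha)\bigl(1+R_3(\varepsilon)-C\varepsilon^2\bigr)-R_1(\varepsilon)-R_2(\varepsilon)-C\varepsilon^2$, which by \eqref{estimate for R123} is at least $\tfrac12\lambda_2(B;\alpha)>0$ for $\varepsilon$ small.
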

\begin{proof}
	Let $u_1>0$ be the first Robin eigenfunction for $\lambda_1(B;\alpha)$ such that $\int_B u_1(x)\,dx=1$. From Proposition \ref{first}, we know $u_1$ is radially symmetric in $B$. We set
	$$
	\delta\coloneqq\int_{B}{\widetilde{u}_{\varepsilon}}(x)u_1(x)\,dx.
	$$
	By Lemma \ref{fraenkel asymmetry} and the uniform estimates \eqref{regularity estimate equation} and \eqref{regularity estimate equation extension}, we know
	$$
	\begin{aligned}
		\delta&=\int_{B\cap\Oe}{\widetilde{u}_{\varepsilon}}(x)u_1(x)\,dx+\int_{B\backslash\Oe}{\widetilde{u}_{\varepsilon}}(x)u_1(x)\,dx\\
		&=\int_{B\cap\Oe}{\widetilde{u}_{\varepsilon}}(x)u_1(x)\,dx+O(\e).
	\end{aligned}
	$$
	Since $\Oe$ is symmetric in every $x_i$-direction, $u_1$ is radially symmetric and $\ue$ is anti-symmetric in $x_1$-direction, we know
	$$
	\int_{B\cap\Oe}{\widetilde{u}_{\varepsilon}}(x)u_1(x)\,dx=0
	$$
	and hence $\delta=O(\varepsilon)$. Now, we are ready to define the test function for $\lambda_2(B;\alpha)$ as follows
	\begin{equation}\label{ve}
		v_\e(x)\coloneqq{\widetilde{u}_{\varepsilon}}(x)\cdot\mathbf{1}_{\overline{B}}(x)-\delta.
	\end{equation}
	Obviously, by the definition of $\delta$, we know $v_\e\perp u_1$ in the $L^2(B)$ sense, i.e.,
	\begin{equation}\label{veu1}\int_{B} v_\e u_1dx=0.\end{equation}
	 Also, by \eqref{regularity estimate equation extension} and the fact that $\delta=O(\e)$, we know
	\begin{equation}\label{regularity estimate v}
		\|v_\e\|_{C^{k_0}(\overline{B})}\leq C
	\end{equation}
	for some constant $C>0$ independent of $\e$. Firstly, notice that
	$$
	\begin{aligned}
		\int_{B}|\nabla v_\e(x)|^2\,dx&=\int_{\Oe}|\nabla \ue(x)|^2\,dx+\int_{B\backslash\Oe}|\nabla v_\e(x)|^2\,dx-\int_{\Oe\backslash B}|\nabla \ue(x)|^2\,dx\\
		&\eqqcolon\int_{\Oe}|\nabla \ue(x)|^2\,dx+R_1(\e).
	\end{aligned}
	$$
	To estimate the boundary integral, we define $\phi_\e:\partial B\to\partial \Oe$ by
	\begin{equation}\label{boundary map}
		\phi_\e(x)\coloneqq x+\varepsilon\psi(x)x,\quad x\in\partial B.
	\end{equation}
	Notice that 
	$$
	{\widetilde{u}_{\varepsilon}}(\phi_\e(x))={\ue}(\phi_\e(x)),\quad x\in\partial B.
	$$
	On the other hand, by the uniform estimates \eqref{regularity estimate equation} and \eqref{regularity estimate equation extension}, we have
	\begin{equation}\label{boundary error}
		{\widetilde{u}_{\varepsilon}}(x)=\ue(\phi_\e(x))+O(\e),\quad x\in\partial B.
	\end{equation}
	Therefore, 
	$$
	\begin{aligned}
		\left|\int_{\partial B}v_\e(x)^2\,d\Hn-\int_{\partial B}{\widetilde{u}_{\varepsilon}}(x)^2\,d\Hn\right|&=\left|\delta^2\Hn(\partial B)-2\delta\int_{\partial B}{\widetilde{u}_{\varepsilon}}(x)\,d\Hn\right|\\
		&\leq2|\delta|\left|\int_{\partial B}{\widetilde{u}_{\varepsilon}}(x)\,d\Hn\right|+O(\e^2)\\
		&=2|\delta|\left|\int_{\partial B}\ue(\phi_\e(x))\,d\Hn+O(\e)\right|+O(\e^2)\\
		&=2|\delta|\left|\int_{\partial B}\ue(\phi_\e(x))\,d\Hn\right|+O(\e^2).
	\end{aligned}
	$$
	Define the Jacobian 
	\begin{equation}\label{jacobian}
			J_\e(x):=(1+\e\psi(x))^{n-2}\sqrt{(1+\varepsilon\psi(x))^2+\e^2|\nabla_{\rm tan}\psi(x)|^2},\quad x\in\partial B.
	\end{equation}
	By a straightforward computation, we have
	\begin{equation}\label{error Je}
		\|J_\e-1\|_{L^\infty(\partial B)}=O(\e), 
	\end{equation}
	from which and the change of variable $y=\phi_\e(x)$ we know
	$$
	\begin{aligned}
		\int_{\partial B}\ue(\phi_\e(x))\,d\Hn&=\int_{\partial B}\ue(\phi_\e(x))J_\e(x)\,d\Hn+O(\e)\\
		&=\int_{\partial \Oe}\ue(y)\,d\Hn+O(\e)
		=O(\e),
	\end{aligned}
	$$
	where we have used $\int_{\partial \Oe}\ue(y)\,d\Hn=0$ due to the anti-symmetry in $x_1$-direction of $\ue$. Hence, we know
	\begin{equation}\label{youyong2}
		\int_{\partial B}v_\e(x)^2\,d\Hn=\int_{\partial B}{\widetilde{u}_{\varepsilon}}(x)^2\,d\Hn+O(\e^2).
	\end{equation}
	Now we define
	$$
	R_2(\e)\coloneqq\alpha\left(\int_{\partial B}{\widetilde{u}_{\varepsilon}}(x)^2\,d\Hn-\int_{\partial \Oe}\ue(x)^2\,d\Hn\right),
	$$
then
	$$
	\alpha\int_{\partial B}v_\e(x)^2\,d\Hn=\alpha\int_{\partial \Oe}\ue(x)^2\,d\Hn+R_2(\e)+O(\e^2).
	$$
Finally, for the $L^2$ integral on $B$, we know
    \begin{equation}\label{youyong}
    	\begin{aligned}
    		\left|\int_{B}v_\e(x)^2\,dx-\int_{B}{\widetilde{u}_{\varepsilon}}(x)^2\,dx\right|&=\left|\delta^2 |B|-2\delta\int_{B}{\widetilde{u}_{\varepsilon}}(x)\,dx\right|\\
    		&\leq 2|\delta|\left|\int_{B\cap\Oe}{\ue}(x)\,dx+\int_{B\backslash\Oe}{\widetilde{u}_{\varepsilon}}(x)\,dx\right|+C\varepsilon^2\\
    		&\leq C\e^2,
    	\end{aligned}
    \end{equation}
	where we have used $\int_{B\cap\Oe}{\ue}(x)\,dx=0$ due to the anti-symmetry in $x_1$-direction of $\ue$, the volume estimate \eqref{volume estimate} and the uniform estimates \eqref{regularity estimate equation}, \eqref{regularity estimate equation extension}. Therefore, it follows from $\widetilde{u}_{\varepsilon}(x)=\ue(x)$ for $x\in \overline{\Oe}$ that
	$$
	\begin{aligned}
		\int_{B}v_\e(x)^2\,dx&\geq \int_{B}{\widetilde{u}_{\varepsilon}}(x)^2\,dx-C\e^2\\
		&=\int_{\Oe}\ue(x)^2\,dx+\int_{B\backslash\Oe}{\widetilde{u}_{\varepsilon}}(x)^2\,dx- \int_{\Oe\backslash B}\ue(x)^2\,dx-C\e^2\\
		&\eqqcolon \int_{\Oe}\ue(x)^2\,dx+R_3(\e)-C\e^2.
	\end{aligned}
	$$
	Together with these three estimates, \eqref{veu1} and \eqref{normalized}, we are now able to estimate $\lambda_2(B;\alpha)$ as follows
	$$
	\begin{aligned}
		\lambda_2(B;\alpha)&\leq\frac{\int_{B}|\nabla v_\e(x)|^2\,dx+\alpha\int_{\partial B}v_\e(x)^2\,d\Hn}{\int_{B}v_\e(x)^2\,dx}\\
		&\leq \frac{\lambda_2(\Oe;\alpha)+R_1(\e)+R_2(\e)+C\e^2}{1+R_3(\e)-C\e^2}.
	\end{aligned}
	$$
	Now we prove the estimates \eqref{estimate for R123}. Recall that
		$$
	\begin{aligned}
		R_1(\e)=\int_{B\backslash\Oe}|\nabla v_\e(x)|^2\,dx-\int_{\Oe\backslash B}|\nabla \ue(x)|^2\,dx,
	\end{aligned}
	$$
	$$
	R_2(\e)=\alpha\left(\int_{\partial B}{\widetilde{u}_{\varepsilon}}(x)^2\,d\Hn-\int_{\partial \Oe}\ue(x)^2\,d\Hn\right),
	$$
	$$
	R_3(\e)=\int_{B\backslash\Oe}{\widetilde{u}_{\varepsilon}}(x)^2\,dx- \int_{\Oe\backslash B}\ue(x)^2\,dx.
	$$
	By the uniform estimates \eqref{regularity estimate equation}, \eqref{regularity estimate equation extension} and the volume estimates \eqref{volume estimate}, it is easy to know that $|R_1(\e)|\leq C\e$ and $|R_3(\e)|\leq C\e$. For the second term, we have
	$$
	\begin{aligned}
		\int_{\partial\Oe}{\ue}(x)^2\,d\Hn&=\int_{\partial B}\ue(\phi_\e(x))^2J_\e(x)\,d\Hn\\
		&=\int_{\partial B}{\widetilde{u}_{\varepsilon}}(\phi_\e(x))^2J_\e(x)\,d\Hn\\
		&=\int_{\partial B}({\widetilde{u}_{\varepsilon}}(x)+O(\e))^2(1+O(\e))\,d\Hn\\
		&=\int_{\partial B}{\widetilde{u}_{\varepsilon}}(x)^2\,d\Hn+O(\e).
	\end{aligned}
	$$
	The proof is complete.
\end{proof}
Finally, in this subsection, we give an upper bound for $|\lambda_2(B;\alpha)-\lambda_2(\Omega;\alpha)|$, which is useful for approximating $v_\e$ by the second Robin eigenfunction relative to $\lambda_2(B;\alpha)$ in the Sobolev $W^{1,2}$ sense. 
\begin{lemma}\label{upper bound}
	There exists a positive constant $C$, such that
	$$
	|\lambda_2(B;\alpha)-\lambda_2(\Omega_\e;\alpha)|\leq C(|R_1(\e)|+|R_2(\e)|+|R_3(\e)|)+C\e^2,\quad\text{for every }0<\e\ll1.
	$$
\end{lemma}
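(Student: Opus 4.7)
The plan is to derive the stated bound from two one-sided estimates separately.

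For the direction $\lambda_2(B;\alpha)-\lambda_2(\Omega_\e;\alpha)\le\text{RHS}$, I would simply cross-multiply in the inequality \eqref{estimate for lambda2B} of Lemma \ref{first estimate}. For $\e$ small enough that $1+R_3(\e)-C\e^2\ge 1/2$, this gives
\[
\lambda_2(B;\alpha)-\lambda_2(\Omega_\e;\alpha)\le R_1(\e)+R_2(\e)-\lambda_2(B;\alpha)R_3(\e)+C\bigl(1+\lambda_2(B;\alpha)\bigr)\e^2,
\]
and since $\lambda_2(B;\alpha)$ is a fixed positive constant independent of $\e$, the desired bound follows.

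For the reverse direction $\lambda_2(\Omega_\e;\alpha)-\lambda_2(B;\alpha)\le\text{RHS}$, my plan is to invoke Theorem A on $\Omega_\e$ itself. Let $B_\e$ denote the ball centered at the origin with radius $R_\e\coloneqq(|\Omega_\e|/\omega_n)^{1/n}$, so that $|B_\e|=|\Omega_\e|$. By Lemma \ref{fraenkel asymmetry}, $|R_\e-1|=O(\e^2)$, hence for $\e$ small we have $\alpha\in(-R_\e^{-1},0)\subset\bigl[-\tfrac{n+1}{n}R_\e^{-1},0\bigr]$, and Theorem A yields
\[
\lambda_2(\Omega_\e;\alpha)\le\lambda_2(B_\e;\alpha).
\]
Combining the scaling relation $\lambda_2(B_\e;\alpha)=R_\e^{-2}\lambda_2(\mathbb{B};R_\e\alpha)$ with the smooth dependence of $\alpha'\mapsto\lambda_2(\mathbb{B};\alpha')$ (which follows from the Bessel-type transcendental equation satisfied by $\sqrt{\lambda_2(\mathbb{B};\alpha')}$ via the representation \eqref{g}), one obtains
\[
\lambda_2(B_\e;\alpha)=\lambda_2(B;\alpha)+O(R_\e-1)=\lambda_2(B;\alpha)+O(\e^2).
\]
Putting these two inequalities together yields $\lambda_2(\Omega_\e;\alpha)-\lambda_2(B;\alpha)\le C\e^2$, which is actually stronger than what the lemma asserts.

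Adding the two one-sided bounds produces the claim. There is no substantive obstacle in this argument; the only point that deserves a sentence of justification is the estimate $\lambda_2(B_\e;\alpha)-\lambda_2(B;\alpha)=O(\e^2)$, which is immediate from the explicit Bessel-function characterization of the second Robin eigenvalue on the ball together with the smoothness of $R\mapsto\lambda_2(B_R(0);\alpha)$ near $R=1$.
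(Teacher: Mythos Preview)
Your proof is correct and follows essentially the same route as the paper: cross-multiply \eqref{estimate for lambda2B} for one direction, and for the other use the Freitas--Laugesen inequality $\lambda_2(\Omega_\e;\alpha)\le\lambda_2(B_\e;\alpha)$ together with $R_\e-1=O(\e^2)$. The only minor difference is in how the bound $\lambda_2(B_\e;\alpha)-\lambda_2(B;\alpha)=O(\e^2)$ is justified: where you invoke smoothness of $\alpha'\mapsto\lambda_2(\mathbb{B};\alpha')$ via the Bessel-function characterization, the paper instead tests the eigenfunction $u_2$ of $\lambda_2(B;\alpha)$ directly in the Rayleigh quotient for $\lambda_2(B;t_\e\alpha)$ (noting that $u_2$ is odd and hence orthogonal to the radial first eigenfunction for any $\alpha$), obtaining $\lambda_2(B;t_\e\alpha)\le\lambda_2(B;\alpha)+(t_\e-1)\alpha\int_{\partial B}u_2^2\,d\Hn$ by hand.
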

Different from the Neumann problem and Steklov problem, since the Robin problem is not scaling invariant, it is not so obvious for the validity of Lemma \ref{upper bound} and we give a detailed proof here.
\begin{proof}
	For $0<\e\ll1$, by Lemma \ref{first estimate} and the Freitas-Laugesen inequality \eqref{Laugesen}, we know
	$$
	0<\lambda_2(\Oe;\alpha)\leq \lambda_2(B_\e;\alpha)=t_\e^{-2}\lambda_2(B;t_\e \alpha),
	$$
	where $B_\e$ is a ball center at the origin such that $|B_\e|=|\Oe|$ and 
	$$
	t_\e=\left(\frac{|\Omega_\e|}{|B|}\right)^{1/n}.
	$$
	Again, let $u_2$ be a normalized (i.e. $\int_{B}u_2^2dx=1$) eigenfunction for $\lambda_2(B;\alpha)$, then Remark \ref{assumption orthogonal} says that $u_2(-x)=-u_2(x)$. Let $\hat{u}_1$ be the first Robin eigenfunction for $\lambda_1(B;t_\e\alpha)$, then it follows from Proposition \ref{first} that $\hat{u}_1$ is radially symmetric, so $\int_{B} u_2\hat{u}_1dx=0$ and then the variational principle \eqref{variation2} implies that
	$$
	0<\lambda_2(B;t_\e \alpha)\leq Q[u_2; B, t_\e\alpha]= \lambda_2(B;\alpha)+(t_\e-1)\alpha\int_{\partial B}u_2(x)^2\,d\Hn\leq C,
	$$
where we have used Lemma \ref{fraenkel asymmetry} for estimating $t_\e$. Thus $0<\lambda_2(\Oe;\alpha)\leq t_\e^{-2}\lambda_2(B;t_\e \alpha)\leq C$. Then by Lemma \ref{first estimate}, it is not difficult to deduce that
	\begin{equation}\label{diyi}
		\lambda_2(B;\alpha)-\lambda_2(\Omega_\e;\alpha)\leq C(|R_1(\e)|+|R_2(\e)|+|R_3(\e)|)+C\e^2.
	\end{equation}
	For the reversed inequality, notice from the above argument that 
	$$
	\begin{aligned}
		\lambda_2(\Omega_\e;\alpha)-\lambda_2(B;\alpha)&\leq t_\e^{-2}\lambda_2(B;t_\e \alpha)-\lambda_2(B;\alpha)\\
		&\leq (t_\e^{-2}-1)\lambda_2(B;\alpha)+\frac{t_\e-1}{t_\e^2}\alpha\int_{\partial B}u_2(x)^2\,d\Hn.
	\end{aligned}
	$$
	Recall  Lemma \ref{fraenkel asymmetry} that $|\Omega_\varepsilon|-|B|\simeq \varepsilon^2$ and thus
	$$
	t_\e=\left(\frac{|\Omega_\e|}{|B|}\right)^{1/n}=\left(1+O(\e^2)\right)^{1/n}=1+O(\e^2),
	$$
	from which we know
	\begin{equation}\label{dier}
		\lambda_2(\Omega_\e;\alpha)-\lambda_2(B;\alpha)\leq C\e^2.
	\end{equation}
	Combining \eqref{diyi} and \eqref{dier}, we get the desired result.
\end{proof}
\subsection{Improving the decay rate by an eigenfunction approximation}
In this subsection, we will prove that if the distance in $C^1$ between $v_\e$ and the eigenspace corresponding to $\lambda_2(B;\alpha)$ has a certain decay rate, then the decays $|R_i(\e)|$, $i=1,2,3$, can be improved of the same order. This type of result is very important and crucial, and was firstly discovered in \cite{Brasco=Jfa,Brasco=GAFA} for the Neumann problem and Steklov problem.
\begin{lemma}\label{decay rate}
	Let $\omega:[0,1]\to\R_+$ be a continuous function such that 
	$$
	\frac{t}{C}\leq\omega(t)\leq C\sqrt{t}
	$$
	for some positive constant $C>0$ independent of $\e$. Suppose that for every $0<\e\ll1$, there exists an eigenfunction $\xi_\e$ for $\lambda_2(B;\alpha)$ such that
	$$
	\|v_\e-\xi_\e\|_{C^1(\overline{B})}\leq C\omega(\e)
	$$
	for some positive constant $C>0$ independent of $\e$. Then there exists another positive constant  independent of $\e$ and we still denote it $C$, such that
	$$
	|R_1(\e)|+|R_2(\e)|+|R_3(\e)|\leq C\e\omega(\e),\quad\text{for every }0<\e\ll1.
	$$
\end{lemma}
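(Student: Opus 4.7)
The plan is to revisit each of the three remainders of Lemma \ref{first estimate} and track where an extra factor of $\omega(\e)$ can be extracted. By Proposition \ref{second}, $\xi_\e$ has the explicit form $g(r)\langle a_\e,x\rangle/r$ for some $a_\e\in\R^n$, so $\xi_\e$ is smooth across $\partial B$ and extends naturally to a neighbourhood of $\overline{B}$. Since $v_\e = \widetilde{u}_\e - \delta$ on $\overline{B}$ with $|\delta|=O(\e)\leq C\omega(\e)$, the hypothesis yields $\|\widetilde{u}_\e-\xi_\e\|_{C^1(\overline{B})}\leq C\omega(\e)$, and the uniform $C^2$-bound \eqref{regularity estimate equation extension} propagates this closeness with the same order to all of $B\cup\Omega_\e$ (both layers $B\setminus\Omega_\e$ and $\Omega_\e\setminus B$ sit inside a tubular neighbourhood of $\partial B$ of thickness $O(\e)$).

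For $R_1(\e) = \int_{B\setminus\Omega_\e}|\nabla\widetilde{u}_\e|^2\,dx-\int_{\Omega_\e\setminus B}|\nabla\widetilde{u}_\e|^2\,dx$, the identity $|\nabla\widetilde{u}_\e|^2-|\nabla\xi_\e|^2 = (\nabla\widetilde{u}_\e-\nabla\xi_\e)\cdot(\nabla\widetilde{u}_\e+\nabla\xi_\e)$ shows the replacement error is $O(\omega(\e))$ pointwise, hence $O(\e\omega(\e))$ after integration on a set of measure $O(\e)$. Parametrising $x=r\theta$ with $\theta\in\partial B$ and Taylor expanding at $r=1$, the principal term reduces to $-\e\int_{\partial B}\psi|\nabla\xi_\e|^2\,d\Hn + O(\e^2)$. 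A direct computation using $\xi_\e|_{\partial B}=g(1)\langle a_\e,x\rangle$ gives $|\nabla\xi_\e|^2|_{\partial B}=(g'(1)^2-g(1)^2)\langle a_\e,x\rangle^2+g(1)^2|a_\e|^2$, so the boundary integral vanishes by \eqref{assumption1} and \eqref{assumption3}; together with $\e^2\leq C\e\omega(\e)$ this proves $|R_1(\e)|\leq C\e\omega(\e)$. The bound on $R_3(\e)$ follows by exactly the same template, reducing modulo $O(\e\omega(\e))$ to $-\e\int_{\partial B}\psi\xi_\e^2\,d\Hn$, which is a scalar multiple of $\int_{\partial B}\psi\langle a_\e,x\rangle^2\,d\Hn=0$.

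For $R_2(\e)$, I would pull back $\int_{\partial\Omega_\e}u_\e^2\,d\Hn$ to $\partial B$ through the map $\phi_\e$ of \eqref{boundary map}, using $J_\e=1+(n-1)\e\psi+O(\e^2)$ and the Taylor expansion $\widetilde{u}_\e\circ\phi_\e=\widetilde{u}_\e+\e\psi\,\partial_r\widetilde{u}_\e+O(\e^2)$ to obtain
\begin{equation*}
	R_2(\e) = -\alpha\e\int_{\partial B}\psi\bigl[(n-1)\widetilde{u}_\e^2+2\widetilde{u}_\e\partial_r\widetilde{u}_\e\bigr]\,d\Hn + O(\e^2).
\end{equation*}
Substituting $\xi_\e$ for $\widetilde{u}_\e$ in the bracket introduces an $L^\infty(\partial B)$ error of size $O(\omega(\e))$, which after multiplication by $\e$ fits inside the target. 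Using $\partial_r\xi_\e|_{\partial B}=g'(1)\langle a_\e,x\rangle$, the remaining principal integrand on $\partial B$ equals $\bigl(2g(1)g'(1)+(n-1)g(1)^2\bigr)\langle a_\e,x\rangle^2$, again killed by $\psi$ through \eqref{assumption3}.

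The main obstacle is not the calculation itself but arranging the approximation so that every first-order-in-$\e$ correction presents itself as $\int_{\partial B}\psi\cdot(\text{quadratic in }\xi_\e)\,d\Hn$, with the quadratic part being a linear combination of $|a_\e|^2$ and $\langle a_\e,x\rangle^2$; this is exactly the purpose of the orthogonality conditions \eqref{assumption1}--\eqref{assumption3}. The combined bounds $\e\leq C\omega(\e)\leq C\sqrt{\e}$ then allow each leftover $O(\e^2)$ or $O(\omega(\e)^2)$ remainder to be absorbed into the target $O(\e\omega(\e))$, closing the argument.
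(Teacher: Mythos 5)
Your proposal is correct and follows essentially the same strategy as the paper's proof: expand the thin-shell and boundary integrals to first order in $\e$, replace $v_\e$ (resp.\ $\widetilde{u}_\e$) by $\xi_\e$ at a cost $O(\e\omega(\e))$ controlled by the $C^1$ hypothesis and the uniform bounds \eqref{regularity estimate equation}, \eqref{regularity estimate equation extension}, and kill the remaining first-order coefficient by the orthogonality of $\psi$ to spherical harmonics of order $0,1,2$. The only cosmetic difference is in $R_1$, where you substitute $\nabla\xi_\e$ directly and use the explicit boundary form $|\nabla\xi_\e|^2|_{\partial B}=(g'(1)^2-g(1)^2)\langle a_\e,x\rangle^2+g(1)^2|a_\e|^2$, while the paper first rewrites $|\nabla u_\e|^2$ near $\partial\Omega_\e$ as $\alpha^2u_\e^2+|\nabla_{\rm tan}u_\e|^2+O(\e)$ via the Robin condition and then invokes \eqref{assumption orthogonal equation}; both routes reduce to the same cancellation.
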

\begin{proof}
The computations are similar to those in \cite{Brasco=Jfa,Brasco=GAFA} but more complicated since there are more terms for the Robin problem. We start to estimate the term $|R_1(\varepsilon)|$. By the uniform estimates \eqref{regularity estimate equation} and the definition \eqref{boundary map} of $\phi_{\varepsilon}$, we have
	\begin{equation}\label{4--17}
	|\nabla u_{\varepsilon}(x)|^2=|\nabla u_{\varepsilon}(\phi_{\varepsilon}(x/|x|))|^2+O(\varepsilon),\quad\forall x \in \Omega_{\varepsilon} \backslash B.
	\end{equation}
	By splitting the gradient in its radial and tangential components, the right-hand side can be written as
	$$
	\begin{aligned}
		|\nabla u_{\varepsilon}(\phi_{\varepsilon}(x /|x|))|^2 & =|\partial_{\rho} u_{\varepsilon}(\phi_{\varepsilon}(x /|x|))|^2+\frac{1}{(1+\varepsilon \psi(x /|x|))^2}|\nabla_{\rm tan} u_{\varepsilon}(\phi_{\varepsilon}(x /|x|))|^2 \\
		& =|\partial_{\rho} u_{\varepsilon}(\phi_{\varepsilon}(x /|x|))|^2+|\nabla_{\rm tan} u_{\varepsilon}(\phi_{\varepsilon}(x /|x|))|^2+O(\varepsilon)
	\end{aligned}
	$$
	with the help of the uniform estimates \eqref{regularity estimate equation}. Since $u_{\varepsilon}$ is an eigenfunction for $\lambda_2(\Oe; \alpha)$, we have
	$$
	\langle\nabla u_{\varepsilon}(x), \nu_{\Omega_{\varepsilon}}(x)\rangle=-\alpha u_{\varepsilon}(x),\quad x\in\partial\Oe,
	$$
	where the outward normal $\nu_{\Omega_{\varepsilon}}$ on $\partial \Oe$ is given by
	$$
\nu_{\Omega_{\varepsilon}}(x)=\frac{(1+\varepsilon \psi(x /|x|)) x /|x|-\varepsilon \nabla_{\rm tan} \psi(x /|x|)}{\sqrt{(1+\varepsilon \psi(x /|x|))^2+|\nabla_{\rm tan} \psi(x /|x|)|^2}}=\frac{x}{|x|}+O(\varepsilon), \quad x \in \partial \Omega_{\varepsilon}.
$$
	Using $\phi_{\varepsilon}(x /|x|)\in\partial\Oe$ and \eqref{regularity estimate equation} once again, we have
	\begin{align}\label{4--18}
	|\nabla u_{\varepsilon}(\phi_{\varepsilon}(x /|x|))|^2&=\alpha^2 u_{\varepsilon}(\phi_{\varepsilon}(x /|x|))^2+|\nabla_{\rm tan} u_{\varepsilon}(\phi_{\varepsilon}(x /|x|))|^2+O(\varepsilon)\nonumber\\
	&=\alpha^2\ue(x/|x|)^2+|\nabla_{\rm tan}\ue(x /|x|)|^2+O(\e).
	\end{align}
	Therefore, by \eqref{omega epsilon}, \eqref{volume estimate}, \eqref{4--17} and \eqref{4--18}, we know
	\begin{equation}\label{yi}
		\begin{aligned}
			\int_{\Oe\backslash B}|\nabla\ue(x)|^2\,dx&=\int_{\partial B\cap\{\psi>0\}}\int_{1}^{1+\varepsilon \psi(x)}|\nabla\ue(sx)|^2ds\,d\Hn\\
			&=\varepsilon\int_{\partial B\cap\{\psi>0\}}\left(\alpha^2\ue(x)^2+|\nabla_{\rm tan}\ue(x )|^2\right)\psi(x)\,d\Hn+O(\e^2)\\
			&=\varepsilon\int_{\partial B\cap\{\psi>0\}}\left(\alpha^2v_\e(x)^2+|\nabla_{\rm tan}v_\e(x )|^2\right)\psi(x)\,d\Hn+O(\e^2),
		\end{aligned}
	\end{equation}
where the last equality comes from the fact that $\partial B\cap\{\psi>0\}\subset\Omega_\e$ and $v_\e=\widetilde{u}_\e-\delta=\ue-\delta=\ue+O(\e)$ in $\overline{\Oe}\cap \overline{B}$. In the very same way, recalling that for $x\in\partial B\backslash\Oe$, one has $\phi_\e(x)\in\partial\Oe\cap \overline{B}$ and so
$$
\nabla v_\e(\phi_\e(x))=\nabla \ue(\phi_\e(x)),\quad x\in\partial B\backslash\Oe.
$$
Then by the uniform estimates \eqref{regularity estimate v} and the same reasoning as before, one obtains for every $x\in B\backslash\Oe$ that
$$
\begin{aligned}
	|\nabla v_{\varepsilon}(x)|^2 &=|\nabla v_{\varepsilon}(\phi_{\varepsilon}(x /|x|))|^2+O(\e)
	=|\nabla u_{\varepsilon}(\phi_{\varepsilon}(x /|x|))|^2+O(\e)\\
	&=\alpha^2\ue(x/|x|)^2+|\nabla_{\rm tan}\ue(x /|x|)|^2+O(\e).
\end{aligned}
$$
Integrating on $B\backslash\Oe$, one similarly has
{\allowdisplaybreaks
		\begin{align}\label{er}
			\int_{B\backslash\Oe}|\nabla v_\e(x)|^2\,dx&=\int_{\partial B\cap\{\psi<0\}}\int_{1+\varepsilon \psi(x)}^{1}|\nabla v_\e(sx)|^2ds\,d\Hn\nonumber\\&=-\varepsilon\int_{\partial B\cap\{\psi<0\}}\left(\alpha^2\ue(x)^2+|\nabla_{\rm tan}\ue(x )|^2\right)\psi(x)\,d\Hn+O(\e^2)\\
			&=-\varepsilon\int_{\partial B\cap\{\psi<0\}}\left(\alpha^2v_\e(x)^2+|\nabla_{\rm tan}v_\e(x )|^2\right)\psi(x)\,d\Hn+O(\e^2).\nonumber
		\end{align}}%
	Combining \eqref{yi} and \eqref{er}, we know from the assumption of this lemma that
	$$
	\begin{aligned}
		|R_1(\e)|&=\left|\int_{B\backslash\Oe}|\nabla v_\e(x)|^2\,dx-\int_{\Oe\backslash B}|\nabla \ue(x)|^2\,dx\right|\\
		&=\left|\varepsilon\int_{\partial B}\left(\alpha^2v_\e(x)^2+|\nabla_{\rm tan}v_\e(x )|^2\right)\psi(x)\,d\Hn+O(\e^2)\right|\\
		&\leq \e\alpha^2\left|\int_{\partial B}\xi_\e(x)^2\psi(x)\,d\Hn\right|+\e\left|\int_{\partial B}|\nabla_{\rm tan}\xi_\e(x )|^2\psi(x)\,d\Hn\right|+C\e\omega(\e)+O(\e^2)\\
		&\leq C\e\omega(\e),
	\end{aligned}
	$$
	where we have used \eqref{assumption orthogonal equation} and $\e=O(\omega(\e))$ for the last inequality. As for the second term $|R_2(\e)|$, notice that
	$$
	\begin{aligned}
		R_2(\e)&=\alpha\left(\int_{\partial B}{\widetilde{u}_{\varepsilon}}(x)^2\,d\Hn-\int_{\partial \Oe}\ue(x)^2\,d\Hn\right)\\
		&=\alpha\left(\int_{\partial B}({\widetilde{u}_{\varepsilon}}(x)^2-\ue(\phi_\e(x))^2J_\e(x))\,d\Hn\right)\\
		&=\alpha\left(\int_{\partial B}({\widetilde{u}_{\varepsilon}}(x)^2-{\widetilde{u}_{\varepsilon}}(\phi_\e(x))^2)\,d\Hn+\int_{\partial B}{\widetilde{u}_{\varepsilon}}(\phi_\e(x))^2(1-J_\e(x))\,d\Hn\right)\\
		&\eqqcolon \alpha(R_{2,1}(\e)+R_{2,2}(\e)).
	\end{aligned}
	$$
	Since $v_\e=\widetilde{u}_\e-\delta=\widetilde{u}_\e+O(\e)$ and $\nabla{\widetilde{u}_{\varepsilon}}(x)=\nabla v_\e(x)$ for $x\in\overline{B}$ by \eqref{ve}, together with the uniform estimates \eqref{regularity estimate equation} and \eqref{regularity estimate equation extension}, and $\partial_\rho \xi_\e(x)=-\alpha \xi_\e(x)$ for $x\in\partial B$, we know
	$$
	\begin{aligned}
		|R_{2,1}(\e)|&=\left|\int_{\partial B}({\widetilde{u}_{\varepsilon}}(x)^2-{\widetilde{u}_{\varepsilon}}(\phi_\e(x))^2)\,d\Hn\right|\\
		&\leq 2\e\left|\int_{\partial B}{\widetilde{u}_{\varepsilon}}(x)\partial_\rho{\widetilde{u}_{\varepsilon}}(x)\psi(x)\,d\Hn\right|+O(\e^2)\\
		&=2\e\left|\int_{\partial B}v_\e(x)\partial_\rho v_\e(x)\psi(x)\,d\Hn\right|+O(\e^2)\\
		&\leq 2\e\left|\int_{\partial B}\xi_\e(x)\partial_\rho \xi_\e(x)\psi(x)\,d\Hn\right|+C\e\omega(\e)+O(\e^2)\\
		&=2\e\left|-\alpha\int_{\partial B}\xi_\e(x)^2\psi(x)\,d\Hn\right|+C\e\omega(\e)+O(\e^2)\\
		&\leq C\e\omega(\e),
	\end{aligned}
	$$
	thanks to \eqref{assumption orthogonal equation} again. Recalling the definition \eqref{jacobian} of the Jacobian $J_\e$, one has
	$$
	J_\e(x)=1+(n-1)\e\psi(x)+O(\e^2),\quad\text{for }x\in\partial B.
	$$
	Hence by using \eqref{assumption orthogonal equation}, one obtains
	$$
	\begin{aligned}
		|R_{2,2}(\e)|&=\left|\int_{\partial B}{\widetilde{u}_{\varepsilon}}(\phi_\e(x))^2(1-J_\e(x))\,d\Hn\right|\\
		&\leq (n-1)\e\left|\int_{\partial B}{\widetilde{u}_{\varepsilon}}(\phi_\e(x))^2\psi(x)\,d\Hn\right|+O(\e^2)\\
		&= (n-1)\e\left|\int_{\partial B}v_\e(x)^2\psi(x)\,d\Hn\right|+O(\e^2)\\
		&\leq (n-1)\e\left|\int_{\partial B}\xi_\e(x)^2\psi(x)\,d\Hn\right|+C\e\omega(\e)+O(\e^2)\\
		&\leq C\e\omega(\e).
	\end{aligned}
	$$
	Finally, we deal with the last term. For $x\in\Oe\backslash B$, by the uniform estimates \eqref{regularity estimate equation}, we know
	$$
	\begin{aligned}
		\ue(x)^2=\ue(\phi_\e(x/|x|))^2+O(\e)
		=\ue(x/|x|)^2+O(\e).
	\end{aligned}
	$$
	Since $|\Omega_\e\backslash B|=O(\e)$, we have
	$$
	\begin{aligned}
		\int_{\Oe\backslash B}\ue(x)^2\,dx&=\int_{\Oe\backslash B}\ue(x/|x|)^2\,dx+O(\e^2)\\
		&=\varepsilon\int_{\partial B\cap\{\psi>0\}}\ue(x)^2\psi(x)\,d\Hn+O(\e^2)\\
		&=\varepsilon\int_{\partial B\cap\{\psi>0\}}v_\e(x)^2\psi(x)\,d\Hn+O(\e^2).
	\end{aligned}
	$$
	Similarly, one can deduce that
	$$
	\begin{aligned}
		\int_{B\backslash\Oe}{\widetilde{u}_{\varepsilon}}(x)^2\,dx&=-\varepsilon\int_{\partial B\cap\{\psi<0\}}v_\e(x)^2\psi(x)\,d\Hn+O(\e^2).
	\end{aligned}
	$$
	By the above two estimates and \eqref{assumption orthogonal equation}, we know
	$$
	\begin{aligned}
		|R_{3}(\e)|&=\left|\int_{B\backslash\Oe}{\widetilde{u}_{\varepsilon}}(x)^2\,dx- \int_{\Oe\backslash B}\ue(x)^2\,dx\right|\\
		&=\left|\varepsilon\int_{\partial B}v_\e(x)^2\psi(x)\,d\Hn+O(\e^2)\right|\\
		&\leq \varepsilon\left|\int_{\partial B}\xi_\e(x)^2\psi(x)\,d\Hn\right|+C\e\omega(\e)+O(\e^2)\\
		&\leq C\e\omega(\e).
	\end{aligned}
	$$
	The proof is complete.
\end{proof}
\subsection{A fine control of the $C^1$ distance between $v_\e$ and the eigenspace corresponding to $\lambda_2(B;\alpha)$}
From Lemma \ref{decay rate}, to improve the decays of $R_i(\e)$, $i=1,2,3$, we need a fine control of the $C^1$ distance between $v_\e$ and the eigenspace corresponding to $\lambda_2(B;\alpha)$. This will be done in the current subsection.\vspace{0.5em}

Firstly, we present a $W^{1,2}$ approximation, which is easier to obtain.
\begin{lemma}\label{bijin w12}
	For every $0<\e\ll1$, there exists an eigenfunction $\xi_\e$ relative to $\lambda_2(B;\alpha)$, such that
	\begin{equation}\label{approximation W12}
		\|v_\e-\xi_\e\|_{W^{1,2}(B)}\leq C\sqrt{|R_1(\e)|+|R_2(\e)|+|R_3(\e)|}+C\e,
	\end{equation}
	for some positive constant $C>0$ independent of $\e$.
\end{lemma}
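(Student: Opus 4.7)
The plan is to approximate $v_\e$ by its $L^2(B)$ projection onto the eigenspace of $\lambda_2(B;\alpha)$ and then use the spectral gap. Let $\{\phi_k\}_{k\geq 1}$ be an $L^2(B)$-orthonormal basis of Robin eigenfunctions on $B$ with eigenvalues $\lambda_k:=\lambda_k(B;\alpha)$, normalized so that $\phi_1$ is proportional to the first eigenfunction $u_1$. By Proposition \ref{second} one has $\lambda_2=\cdots=\lambda_{n+1}<\lambda_{n+2}$, so the spectral gap $\tau:=\lambda_{n+2}-\lambda_2$ is strictly positive. Since \eqref{veu1} says $v_\e\perp u_1$ in $L^2(B)$, the expansion $v_\e=\sum_{k\geq 2}c_k\phi_k$ holds, and the natural choice is
$$\xi_\e:=\sum_{k=2}^{n+1}c_k\phi_k,$$
which lies in the eigenspace of $\lambda_2(B;\alpha)$ so that $v_\e-\xi_\e=\sum_{k\geq n+2}c_k\phi_k$.

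The key quantitative input is a tight control on the Rayleigh quotient $Q[v_\e;B,\alpha]$. The computations carried out inside the proof of Lemma \ref{first estimate} identify numerator and denominator of $Q[v_\e;B,\alpha]$ as $\lambda_2(\Omega_\e;\alpha)+R_1(\e)+R_2(\e)+O(\e^2)$ and $1+R_3(\e)-O(\e^2)$ respectively; combining with Lemma \ref{upper bound} gives
$$Q[v_\e;B,\alpha]-\lambda_2(B;\alpha)\leq C\bigl(|R_1(\e)|+|R_2(\e)|+|R_3(\e)|\bigr)+C\e^2.$$
On the other hand, Parseval's identity for the Robin bilinear form $a(u,v):=\int_B \nabla u\cdot\nabla v\,dx+\alpha\int_{\partial B}uv\,d\Hn$ together with $v_\e\perp \phi_1$ yields
$$\bigl(Q[v_\e;B,\alpha]-\lambda_2(B;\alpha)\bigr)\,\|v_\e\|_{L^2(B)}^{2}=\sum_{k\geq n+2}(\lambda_k-\lambda_2)\,c_k^{2}\geq \tau\,\|v_\e-\xi_\e\|_{L^2(B)}^{2}.$$
Since $\|v_\e\|_{L^2(B)}^{2}=1+O(\e)$ by Lemma \ref{first estimate}, this immediately produces the $L^2$ estimate $\|v_\e-\xi_\e\|_{L^2(B)}^{2}\leq C(|R_1(\e)|+|R_2(\e)|+|R_3(\e)|+\e^{2})$, and the very same identity also gives $a(v_\e-\xi_\e,v_\e-\xi_\e)=\sum_{k\geq n+2}\lambda_k c_k^{2}\leq C(|R_1(\e)|+|R_2(\e)|+|R_3(\e)|+\e^{2})$.

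Upgrading from $a(\cdot,\cdot)$ to the gradient norm is where the main obstacle lies: because $\alpha<0$, the bilinear form $a$ fails to be coercive on $H^{1}(B)$, so one cannot simply read off $\|\nabla(v_\e-\xi_\e)\|_{L^2(B)}$ from $a(v_\e-\xi_\e,v_\e-\xi_\e)$. My plan to overcome this is to write
$$\int_B |\nabla(v_\e-\xi_\e)|^{2}\,dx=a(v_\e-\xi_\e,v_\e-\xi_\e)+|\alpha|\int_{\partial B}(v_\e-\xi_\e)^{2}\,d\Hn$$
and absorb the boundary integral via the standard trace-type inequality
$$\int_{\partial B} f^{2}\,d\Hn\leq \delta\int_B |\nabla f|^{2}\,dx+C_\delta\int_B f^{2}\,dx,\quad f\in H^{1}(B),$$
valid for every $\delta>0$; choosing $\delta=1/(2|\alpha|)$ lets one absorb half of $\|\nabla(v_\e-\xi_\e)\|_{L^2(B)}^{2}$ to the left and the remaining $L^2$-term is already under control by the previous paragraph. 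This produces $\|\nabla(v_\e-\xi_\e)\|_{L^2(B)}^{2}\leq C(|R_1(\e)|+|R_2(\e)|+|R_3(\e)|+\e^{2})$, and combined with the $L^2$ bound and the elementary inequality $\sqrt{a+b}\leq\sqrt{a}+\sqrt{b}$ this yields \eqref{approximation W12}.
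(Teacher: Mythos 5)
Your proposal is correct and follows essentially the same route as the paper: expand $v_\e$ in the Robin eigenbasis, take $\xi_\e$ to be the projection onto the $\lambda_2(B;\alpha)$-eigenspace, and control $\sum_{k\geq n+2}(\lambda_k-\lambda_2)c_k^2$ through the Rayleigh-quotient bounds coming from Lemma \ref{first estimate} and Lemma \ref{upper bound} together with the spectral gap $\lambda_{n+2}(B;\alpha)>\lambda_2(B;\alpha)$. Your explicit absorption of the boundary term via the trace inequality with a small parameter $\delta$ is precisely the coercivity estimate $\|f\|_{W^{1,2}(B)}^2\leq C\,\mathcal{R}(f,f)+C\|f\|_{L^2(B)}^2$ that the paper invokes as the classic trace inequality, so the two arguments coincide.
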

\begin{proof}
	The idea is similar as the one proposed in \cite{Brasco=GAFA}. Let $\{\xi_k\}_{k=1}^\infty$ be the normalized Robin eigenfunctions with identity $L^2$ norm, it is well known that $\{\xi_k\}_{k=1}^\infty$ is an orthonormal basis of $L^2(B)$. Define the Robin bilinear form $\mathcal{R}:W^{1,2}(B)\times W^{1,2}(B)\to\R$ by
	$$
	\mathcal{R}(u,v)\coloneqq\int_{B}\langle\nabla u(x),\nabla v(x)\rangle\,dx+\alpha\int_{\partial B}u(x)v(x)\,d\Hn,
	$$
	then we have 
	$$
	\mathcal{R}(\xi_i,\xi_j)=\sqrt{\lambda_i(B;\alpha)\lambda_j(B;\alpha)}\delta_{ij},\quad\text{for }i,j\in\mathbb{N}_+,
	$$
	where $\delta_{ij}$ is the Kronecker Delta. Since \eqref{veu1} says that $\int_{B}v_\e \xi_1dx=0$, we have the following spectral decomposition
	$$
	v_\e(x)=\sum_{k=2}^\infty a_k(\e)\xi_k(x),\quad a_k(\e)=\int_{B}v_\e(x)\xi_k(x)\,dx.
	$$
	Also, we know
	$$
	\|v_\e\|_{L^2(B)}^2=\sum_{k=2}^\infty a_k(\e)^2.
	$$
Consequently, by \eqref{normalized} and \eqref{youyong},
$$
\begin{aligned}
	\left|\sum_{k=2}^\infty a_k(\e)^2-1\right|
	&=\left|\int_{B}v_\e(x)^2\,dx-\int_{\Oe}\ue(x)^2\,dx\right|\\
	&=\left|\int_{B}(v_{\e}(x)^2-{\widetilde{u}_{\varepsilon}}(x)^2)\,dx+R_3(\e)\right|\\
	&\leq C\e^2+|R_3(\e)|.
\end{aligned}
$$
We rewrite the above inequality as follows
\begin{equation}\label{budengshi yi}
	\left|\sum_{k=2}^{n+1}a_k(\e)^2-1\right|\leq\sum_{k=n+2}^\infty a_k(\e)^2+C\e^2+|R_3(\e)|.
\end{equation}
On the other hand, we know by \eqref{youyong2} and Lemma \ref{upper bound} that
$$
\begin{aligned}
	&\quad\left|\int_{B}|\nabla v_\e(x)|^2\,dx+\alpha\int_{\partial B}v_\e(x)^2\,d\Hn-\lambda_2(B;\alpha)\right|\\
	&\leq \left|\int_{\Oe}|\nabla \ue(x)|^2\,dx+\alpha\int_{\partial \Oe}\ue(x)^2\,d\Hn-\lambda_2(B;\alpha)\right|+C\e^2+|R_1(\e)|+|R_2(\e)|\\
	&=|\lambda_2(\Oe;\alpha)-\lambda_2(B;\alpha)|+C\e^2+|R_1(\e)|+|R_2(\e)|\\
	&\leq C\e^2+C(|R_1(\e)|+|R_2(\e)|+|R_3(\e)|),
\end{aligned}
$$
which, together with $$\int_{B}|\nabla v_\e(x)|^2\,dx+\alpha\int_{\partial B}v_\e(x)^2\,d\Hn=\sum_{k=2}^\infty \lambda_k(B;\alpha)a_k(\e)^2,$$ can be rewritten as
\begin{equation}\label{budengshi er}
	\sum_{k=n+2}^\infty \lambda_k(B;\alpha)a_k(\e)^2\leq \lambda_2(B;\alpha)\left|\sum_{k=2}^{n+1} a_k(\e)^2-1\right|+C\e^2+C(|R_1(\e)|+|R_2(\e)|+|R_3(\e)|).
\end{equation}
We can now combine \eqref{budengshi yi} and \eqref{budengshi er} to obtain that
$$
\sum_{k=n+2}^\infty (\lambda_k(B;\alpha)-\lambda_2(B;\alpha))a_k(\e)^2\leq C\e^2+C(|R_1(\e)|+|R_2(\e)|+|R_3(\e)|).
$$
By Proposition \ref{second}, we know
$$
1-\frac{\lambda_2(B;\alpha)}{\lambda_k(B;\alpha)}\geq 1-\frac{\lambda_2(B;\alpha)}{\lambda_{n+2}(B;\alpha)}>0,\quad \forall k\geq n+2,
$$
is positive and non-decreasing, from which we get
$$
\sum_{k=n+2}^\infty \lambda_k(B;\alpha)a_k(\e)^2\leq C\e^2+C(|R_1(\e)|+|R_2(\e)|+|R_3(\e)|)
$$
with a different constant $C$ depending on the spectral gap $\lambda_{n+2}(B;\alpha)-\lambda_2(B;\alpha)$, but still independent of $\e$. Now, take
$$
\xi_\e(x)\coloneqq\sum_{k=2}^{n+1}a_k(\e)\xi_k(x),
$$
which is an eigenfunction relative to $\lambda_2(B;\alpha)$. We claim that $\xi_\e$ is the desired eigenfunction. Notice that
$$
v_\e-\xi_\e=\sum_{k=n+2}^{\infty}a_k(\e)\xi_k.
$$
Due to the classic trace inequality \cite[page 81]{Henrot=2017Book}, we know
$$
\begin{aligned}
	\|v_\e-\xi_\e\|_{W^{1,2}(B)}^2&\leq C\mathcal{R}(v_\e-\xi_\e,v_\e-\xi_\e)+C\|v_\e-\xi_\e\|_{L^{2}(B)}^2\\
	&=C\sum_{k=n+2}^\infty \lambda_k(B;\alpha)a_k(\e)^2+C\sum_{k=n+2}^\infty a_k(\e)^2\\
	&\leq C\e^2+C(|R_1(\e)|+|R_2(\e)|+|R_3(\e)|).
\end{aligned}
$$
The proof is complete.
\end{proof}
Now, we are aiming to establish the following $C^1$ approximation result by Lemma \ref{bijin w12} and elliptic estimates.
\begin{lemma}\label{bijin c1}
	For every $0<\e\ll1$, there exists an eigenfunction $\xi_\e$ relative to $\lambda_2(B;\alpha)$, such that
	\begin{equation}\label{approximation c1}
		\|v_\e-\xi_\e\|_{C^1(\overline{B})}\leq C\sqrt{|R_1(\e)|+|R_2(\e)|+|R_3(\e)|}+C\e,
	\end{equation}
	for some positive constant $C>0$ independent of $\e$.
\end{lemma}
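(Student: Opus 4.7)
\medskip\noindent\textbf{Proof proposal.} The plan is to upgrade the $W^{1,2}$ estimate of Lemma \ref{bijin w12} to a $C^1$ estimate via elliptic regularity for the Robin Laplacian on the smooth ball $B$. With $\xi_\varepsilon$ produced by Lemma \ref{bijin w12}, set $w_\varepsilon := v_\varepsilon - \xi_\varepsilon$. Since $\xi_\varepsilon$ solves $-\Delta\xi_\varepsilon = \lambda_2(B;\alpha)\xi_\varepsilon$ in $B$ with $\partial_\nu\xi_\varepsilon + \alpha\xi_\varepsilon = 0$ on $\partial B$, the remainder satisfies
\begin{equation*}
-\Delta w_\varepsilon - \lambda_2(B;\alpha) w_\varepsilon = F_\varepsilon \quad\text{in } B,\qquad \partial_\nu w_\varepsilon + \alpha w_\varepsilon = G_\varepsilon \quad\text{on } \partial B,
\end{equation*}
with $F_\varepsilon := -\Delta v_\varepsilon - \lambda_2(B;\alpha)v_\varepsilon$ and $G_\varepsilon := \partial_\nu v_\varepsilon + \alpha v_\varepsilon$. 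By the choice of $\xi_\varepsilon$ in Lemma \ref{bijin w12} and by \eqref{veu1}, $w_\varepsilon$ is $L^2(B)$-orthogonal to $u_1$ and to the whole $\lambda_2(B;\alpha)$-eigenspace, so the operator $-\Delta - \lambda_2(B;\alpha)$ with Robin boundary data is invertible on this orthogonal complement with a bounded inverse.

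The next step is to estimate the forcing terms. On the overlap $B\cap\Omega_\varepsilon$, using $v_\varepsilon = u_\varepsilon - \delta$ and $-\Delta u_\varepsilon = \lambda_2(\Omega_\varepsilon;\alpha)u_\varepsilon$,
\begin{equation*}
F_\varepsilon = [\lambda_2(\Omega_\varepsilon;\alpha) - \lambda_2(B;\alpha)]v_\varepsilon + \lambda_2(\Omega_\varepsilon;\alpha)\delta,
\end{equation*}
and by Lemma \ref{upper bound} together with $|\delta|=O(\varepsilon)$ (from the proof of Lemma \ref{first estimate}) this gives $|F_\varepsilon|\leq C(|R_1(\varepsilon)|+|R_2(\varepsilon)|+|R_3(\varepsilon)|+\varepsilon)$ there. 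On the thin annular layer $B\setminus\Omega_\varepsilon$, of measure $O(\varepsilon)$ by Lemma \ref{fraenkel asymmetry}, $F_\varepsilon$ is only controlled by the uniform $C^2$ bound on the extension $\widetilde u_\varepsilon$ from \eqref{regularity estimate equation extension}. For the boundary term, writing each $x\in\partial B$ in terms of $\phi_\varepsilon(x)\in\partial\Omega_\varepsilon$ via \eqref{boundary map}, combining the Robin condition on $\partial\Omega_\varepsilon$ with a first-order Taylor expansion in the uniform $C^2$ bound on $u_\varepsilon$ yields $\|G_\varepsilon\|_{W^{1-1/p,p}(\partial B)}\leq C\varepsilon$ for every fixed $p>n$.

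I would then invoke the standard $L^p$ elliptic estimate for the Robin problem on the smooth ball $B$,
\begin{equation*}
\|w_\varepsilon\|_{W^{2,p}(B)} \leq C\bigl(\|F_\varepsilon\|_{L^p(B)} + \|G_\varepsilon\|_{W^{1-1/p,p}(\partial B)} + \|w_\varepsilon\|_{L^2(B)}\bigr),
\end{equation*}
on the relevant orthogonal complement, and combine it with the Sobolev embedding $W^{2,p}(B)\hookrightarrow C^1(\overline B)$ for $p>n$ and the $L^2$ control from Lemma \ref{bijin w12}. Denote $E_\varepsilon := \sqrt{|R_1(\varepsilon)|+|R_2(\varepsilon)|+|R_3(\varepsilon)|}+\varepsilon$. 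The main obstacle is to establish $\|F_\varepsilon\|_{L^p(B)}\leq C E_\varepsilon$: the naive estimate on $B\setminus\Omega_\varepsilon$ only yields a factor $\varepsilon^{1/p}$, which is too weak for $p>1$. To kill this tail I plan to interpolate between the $L^p$ bound just obtained and the uniform $C^{k_0}(\overline B)$ bound on $w_\varepsilon$ (which follows from \eqref{regularity estimate equation}, \eqref{regularity estimate equation extension} and the equivalence of norms on the finite-dimensional $\lambda_2$-eigenspace), choosing $k_0$ large enough that the interpolation exponent absorbs $\varepsilon^{1/p}$. Should a first pass only deliver a preliminary decay $\|w_\varepsilon\|_{C^1(\overline B)}\leq C\omega(\varepsilon)$ with $\omega$ satisfying the hypotheses of Lemma \ref{decay rate}, I would bootstrap: that lemma sharpens each $R_i(\varepsilon)$, Lemma \ref{bijin w12} in turn tightens the $W^{1,2}$ bound on $w_\varepsilon$, and the elliptic step is rerun; a finite number of such iterations converges to the sharp rate claimed in Lemma \ref{bijin c1}.
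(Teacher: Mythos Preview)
Your overall strategy—upgrade the $W^{1,2}$ bound of Lemma~\ref{bijin w12} to $C^1$ via elliptic regularity on $B$—matches the paper's. The paper, however, works entirely in the $L^2$-scale and bootstraps the Sobolev order (from $W^{1,2}$ up to $W^{k_0,2}$ with $k_0=[n/2]+2$, then embeds into $C^1$), rather than jumping to a single $W^{2,p}$ estimate with $p>n$.

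The genuine gap is your treatment of what you call the ``main obstacle,'' the layer $B\setminus\Omega_\varepsilon$. You settle for the uniform $C^2$ bound on $F_\varepsilon$ there and then propose to repair the resulting $\varepsilon^{1/p}$ loss by interpolation or by bootstrapping through Lemma~\ref{decay rate}. Neither fix works as written: a first pass producing only $\|w_\varepsilon\|_{C^1}\le C\varepsilon^{1/p}$ with $p>n\ge 2$ violates the standing hypothesis $\omega(t)\le C\sqrt t$ of Lemma~\ref{decay rate} (since $1/p<1/2$), so that lemma cannot be invoked; and Gagliardo--Nirenberg interpolation of the uniform $C^{k_0}$ bound against any $L^p$ or $L^2$ control yields only a fractional power of $E_\varepsilon$, never the sharp rate. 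The paper's resolution is elementary and you overlooked it: since the extension $\widetilde u_\varepsilon$ is uniformly in $C^{k_0}$ with $k_0\ge 3$, and since every $x\in B$ lies within distance $C\varepsilon$ of some $x'\in B\cap\Omega_\varepsilon$, a one-step Taylor expansion of $\Delta\widetilde u_\varepsilon$ gives
\[
-\Delta\widetilde u_\varepsilon(x)=-\Delta u_\varepsilon(x')+O(\varepsilon)=\lambda_2(\Omega_\varepsilon;\alpha)\,u_\varepsilon(x')+O(\varepsilon)=\lambda_2(\Omega_\varepsilon;\alpha)\,v_\varepsilon(x)+O(\varepsilon)
\]
\emph{pointwise on all of $B$}, including the bad layer. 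Hence $F_\varepsilon=(\lambda_2(\Omega_\varepsilon;\alpha)-\lambda_2(B;\alpha))v_\varepsilon+O(\varepsilon)$ uniformly, and by Lemma~\ref{upper bound} this is $O(E_\varepsilon)$ in $L^\infty$; the region $B\setminus\Omega_\varepsilon$ contributes nothing extra. With this observation in hand either your $L^p$ route or the paper's $L^2$-bootstrap closes immediately, with no appeal to Lemma~\ref{decay rate} needed inside the proof of the present lemma.
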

\begin{proof}
	Define
	$$
	f_\e(x)\coloneqq -\Delta{\widetilde{u}_{\varepsilon}}(x),\quad x\in B.
	$$
	Notice that $v_\e$ and $\xi_\e$ are solutions for the following Neumann boundary problems
	$$
	\begin{cases}
		-\Delta v_\e=f_\e \quad &\hbox{in}\ B, \\
		\frac{\partial v_\e}{\partial \nu}=\alpha g_\e \quad &\hbox{on}\ \partial B, \\
	\end{cases}
	\quad\quad\quad\text{and}\quad\quad\quad
\begin{cases}
	-\Delta \xi_\e=\lambda_2(B;\alpha)\xi_\e \quad &\hbox{in}\ B, \\
	\frac{\partial \xi_\e}{\partial \nu}=-\alpha \xi_\e \quad &\hbox{on}\ \partial B, \\
\end{cases}
$$
where the boundary value $g_\e$ of the normal derivative of $v_\e$ is given by
$$
\begin{aligned}
	g_\e(x)&=-v_\e(x)+\left(v_\e(x)-\ue(\phi_\e(x))\right)+\frac{1}{\alpha}\langle\nabla{\widetilde{u}_{\varepsilon}}(x)-\nabla\ue(\phi_\e(x)),\nu_{\Omega_{\varepsilon}}(\phi_\e(x))\rangle\\
	&\quad+\frac{1}{\alpha}\langle\nabla{\widetilde{u}_{\varepsilon}}(x),\nu_B(x)-\nu_{\Omega_{\varepsilon}}(\phi_\e(x))\rangle\\
	&\eqqcolon-v_\e(x)+\sum_{i=1}^3g_{\e,i}(x),\quad x\in\partial B.
\end{aligned}
$$
Here, we have used the fact that $\nabla{\widetilde{u}_{\varepsilon}}=\nabla v_\e$ on $\partial B$ and $\langle\nabla\ue,\nu_{\Omega_{\varepsilon}}\rangle=-\alpha \ue$ on $\partial\Oe$. 

By the standard elliptic estimate \cite[page 438]{MR4703940} and the triangle inequality, we know that for every $k\geq2$,
$$
\begin{aligned}
	\|v_{\varepsilon}-\xi_{\varepsilon}\|_{W^{k, 2}(B)} &\leq C\left(\|f_{\varepsilon}-\lambda_2(B;\alpha)\xi_\e\|_{W^{k-2,2}(B)}+\|v_{\varepsilon}-\xi_{\varepsilon}\|_{W^{k-1,2}(B)}+\|g_{\varepsilon}+\xi_{\varepsilon}\|_{W^{k-3 / 2,2}(\partial B)}\right) \\
	&\leq C\left(\|f_{\varepsilon}-\lambda_2(B;\alpha)\xi_\e\|_{W^{k-2,2}(B)}+\|v_{\varepsilon}-\xi_{\varepsilon}\|_{W^{k-1,2}(B)}\right. \\
	&\quad+\|v_{\varepsilon}-\xi_{\varepsilon}\|_{W^{k-3 / 2,2}(\partial B)}+\sum_{i=1}^3\|g_{\e,i}\|_{W^{k-3 / 2,2}(\partial B)}\Big).
\end{aligned}
$$
Notice that for $x\in\Omega_\e\cap B$, 
$$
f_\e(x)=-\Delta \ue(x)=\lambda_2(\Oe;\alpha)\ue(x)=\lambda_2(\Oe;\alpha)(v_\e(x)+\delta).
$$
For any $x\in B$, it follows from \eqref{omega epsilon} that there exists $x^\prime\in \Oe\cap B$ such that $|x-x^\prime|\leq C\e$, thus by the uniform estimates \eqref{regularity estimate equation} and \eqref{regularity estimate equation extension}, we obtain
$$
\begin{aligned}
	f_\e(x)&=-\Delta v_\e(x)=-\Delta v_\e(x^\prime)+O(\e)\\
	&=-\Delta u_\e(x^\prime)+O(\e)\\
	&=\lambda_2(\Oe;\alpha)(v_\e(x^\prime)+\delta)+O(\e)\\
	&=\lambda_2(\Oe;\alpha)v_\e(x)+O(\e).
\end{aligned}
$$
Therefore, by Lemma \ref{upper bound} and \eqref{regularity estimate v}, one has
$$
\begin{aligned}
	\|f_{\varepsilon}-\lambda_2(B;\alpha)\xi_\e\|_{W^{k-2,2}(B)}&=\|\lambda_2(\Oe;\alpha)v_\e-\lambda_2(B;\alpha)\xi_\e+O(\e)\|_{W^{k-2,2}(B)}\\
	&\leq C|\lambda_2(\Oe;\alpha)-\lambda_2(B;\alpha)|+C\|v_\e-\xi_\e\|_{W^{k-2,2}(B)}+C\e\\
	&\leq C\sqrt{|R_1(\e)|+|R_2(\e)|+|R_3(\e)|}+C\e+C\|v_\e-\xi_\e\|_{W^{k-2,2}(B)}\\
	&\leq C\sqrt{|R_1(\e)|+|R_2(\e)|+|R_3(\e)|}+C\e+C\|v_\e-\xi_\e\|_{W^{k-1,2}(B)}.
\end{aligned}
$$
Now we handle the boundary term. Since ${\widetilde{u}_{\varepsilon}}(\phi_\e(x))={\ue}(\phi_\e(x))$ for $x\in\partial B$, we know by the uniform estimates \eqref{regularity estimate equation} and \eqref{regularity estimate equation extension} that
$$
\begin{aligned}
	\|g_{\e,1}\|_{W^{k-3 / 2,2}(\partial B)}&=\|{\widetilde{u}_{\varepsilon}}-{\widetilde{u}_{\varepsilon}}\circ\phi_\e-\delta\|_{W^{k-3 / 2,2}(\partial B)}=O(\e).
\end{aligned}
$$
Besides, it follows from the triangle inequality and uniform estimates \eqref{regularity estimate equation}, \eqref{regularity estimate equation extension} that
$$
\begin{aligned}
	\|g_{\e,2}\|_{W^{k-3 / 2,2}(\partial B)}&\leq C\|\nabla{\widetilde{u}_{\varepsilon}}-\nabla{\ue}\circ\phi_\e\|_{W^{k-3 / 2,2}(\partial B)}\\
	&\leq C\|\nabla{\widetilde{u}_{\varepsilon}}-\nabla({\ue}\circ\phi_\e)\|_{W^{k-3 / 2,2}(\partial B)}\\
	&\quad+C\|\nabla({\ue}\circ\phi_\e)-\nabla{\ue}\circ\phi_\e\|_{W^{k-3 / 2,2}(\partial B)}\\
	&\leq C\e,
\end{aligned}
$$
thanks to the Leibniz's rule and the fact that ${\widetilde{u}_{\varepsilon}}(\phi_\e(x))={\ue}(\phi_\e(x))$ for $x\in\partial B$. Finally, it was proved by \cite[page 4707]{Brasco=Jfa} that
$$
\|g_{\e,3}\|_{W^{k-3 / 2,2}(\partial B)}\leq \|\nu_B-\nu_{\Omega_{\varepsilon}}\circ\phi_\e\|_{W^{k-3 / 2,2}(\partial B)}\leq C\e.
$$
Collecting all the estimates, we are led to
\begin{equation}\label{zuizhong}
	\begin{aligned}
		\|v_{\varepsilon}-\xi_{\varepsilon}\|_{W^{k, 2}(B)}&\leq  C\sqrt{|R_1(\e)|+|R_2(\e)|+|R_3(\e)|}+C\e\\
		&\quad+C\|v_\e-\xi_\e\|_{W^{k-1,2}(B)}+C\|v_{\varepsilon}-\xi_{\varepsilon}\|_{W^{k-3 / 2,2}(\partial B)}.
	\end{aligned}
\end{equation}
Now we use a standard bootstrap procedure to obtain the $C^1$ approximation and the reader can see the choice of $k_0$ clearly here. Firstly, for $k=2$, by the trace inequality, 
$$
\|v_{\varepsilon}-\xi_{\varepsilon}\|_{W^{1 / 2,2}(\partial B)} \leq C\|v_{\varepsilon}-\xi_{\varepsilon}\|_{W^{1,2}(B)}.
$$
By \eqref{approximation W12} and \eqref{zuizhong} with $k=2$, we have
$$
\|v_{\varepsilon}-\xi_{\varepsilon}\|_{W^{2, 2}(B)}\leq C\sqrt{|R_1(\e)|+|R_2(\e)|+|R_3(\e)|}+C\e.
$$
Applying the trace inequality again, we obtain
$$
\|v_{\varepsilon}-\xi_{\varepsilon}\|_{W^{3 / 2,2}(\partial B)} \leq C\|v_{\varepsilon}-\xi_{\varepsilon}\|_{W^{2,2}(B)} \leq C\sqrt{|R_1(\e)|+|R_2(\e)|+|R_3(\e)|}+C\e.
$$
Therefore, using \eqref{zuizhong} with $k=3$, we arrive the higher order estimate
$$
	\|v_{\varepsilon}-\xi_{\varepsilon}\|_{W^{3, 2}(B)}\leq  C\sqrt{|R_1(\e)|+|R_2(\e)|+|R_3(\e)|}+C\e.
$$
Take $k_0=[n / 2]+2$. Then finitely many repetitions of the previous argument give
$$
\|v_{\varepsilon}-\xi_{\varepsilon}\|_{W^{k_0, 2}(B)}\leq  C\sqrt{|R_1(\e)|+|R_2(\e)|+|R_3(\e)|}+C\e,
$$
and then \eqref{approximation c1} follows by the classic Sobolev embedding theorem.
\end{proof}
\subsection{Conclusion}
We have finished all the technical mission and now it is time to yield the conclusion. By Lemma \ref{upper bound}, we know
\begin{equation}\label{4=71}
|\lambda_2(B;\alpha)-\lambda_2(\Omega_\e;\alpha)|\leq C(|R_1(\e)|+|R_2(\e)|+|R_3(\e)|)+C\e^2. 
\end{equation}
Take $\omega(\e)=\sqrt{|R_1(\e)|+|R_2(\e)|+|R_3(\e)|}+\e$. Then \eqref{estimate for R123} implies
$\e\leq \omega(\e)\leq C{\e}^{1/2}$.
Consequently, by applying Lemma \ref{decay rate} and Lemma \ref{bijin c1} with this choice of $\omega$, we obtain
$$
|R_1(\e)|+|R_2(\e)|+|R_3(\e)|\leq C\e\omega(\e)=C\e \sqrt{|R_1(\e)|+|R_2(\e)|+|R_3(\e)|}+C\e^2.
$$
It follows from Young's inequality that
$$
|R_1(\e)|+|R_2(\e)|+|R_3(\e)|\leq \widetilde{C}\e^2,
$$
for some $\widetilde{C}$ independent of $\e$.
Inserting this into \eqref{4=71} leads to
\begin{equation}\label{ffff}
|\lambda_2(B;\alpha)-\lambda_2(\Omega_\e;\alpha)|\leq C\e^2. 
\end{equation}
There is still a little work to do before proving the sharpness of the exponent 2. 
\begin{proof}[Proof of Theorem \ref{main result three}]
	Since $\mathcal{A}(\Oe)\simeq\e$, what we actually need is to prove that
	$$
	C_1\e^2\leq |\lambda_2(B_\e;\alpha)-\lambda_2(\Omega_\e;\alpha)|\leq C_2\e^2
	$$
	for some constant $C_1,C_2>0$ independent of $\e$, where $B_\e$ is a ball center at the origin such that $|B_\e|=|\Omega_\e|$. From the expression of the constant $\gamma(n,\alpha,R)$ in Theorem \ref{main result one}, it is easy to get the existence of the constant $C_1$. For the existence of $C_2$, recall that in the proof of Lemma \ref{upper bound}, we have used that
	$$
	\lambda_2(B_\e;\alpha)=t_\e^{-2}\lambda_2(B;t_\e \alpha),
	$$
	with
	$$
	t_\e=\left(\frac{|\Omega_\e|}{|B|}\right)^{1/n}=\left(1+O(\e^2)\right)^{1/n}=1+O(\e^2).
	$$
	Again, let $u_2$ be a normalized eigenfunction for $\lambda_2(B;\alpha)$ and $u_{2,\e}$ be a normalized eigenfunction for $\lambda_2(B;t_\e \alpha)$, we know 
	$$
	\lambda_2(B;t_\e \alpha)\leq Q[u_2; B, t_\e\alpha]= \lambda_2(B;\alpha)+(t_\e-1)\alpha\int_{\partial B}u_2(x)^2\,d\Hn
	$$
	and
	$$
	\lambda_2(B; \alpha)\leq Q[u_{2,\e}; B,\alpha]=\lambda_2(B;t_\e\alpha)+(1-t_\e)\alpha\int_{\partial B}u_{2,\e}(x)^2\,d\Hn.
	$$
	Therefore,
	$$
	\begin{aligned}
		|\lambda_2(B_\e;\alpha)-\lambda_2(B;\alpha)|&\leq t_\e^{-2}|\lambda_2(B;t_\e \alpha)-\lambda_2(B;\alpha)|+|1-t_\e^{-2}|\lambda_2(B;\alpha)\\
		&\leq C t_\e^{-2}|1-t_\e|\left|\int_{\partial B}(u_2(x)^2+u_{2,\e}(x)^2)\,d\Hn\right|+C\e^2\\
		&\leq C_3\e^2.
	\end{aligned}
	$$
This, together with \eqref{ffff}, implies the existence of $C_2>0$ such that $|\lambda_2(B_\e;\alpha)-\lambda_2(\Omega_\e;\alpha)|\leq C_2\e^2$. This completes the proof of Theorem \ref{main result three}.
\end{proof}

\section*{Acknowledgement}
Zhijie Chen is supported by National Key R\&D Program of China (Grant 2023YFA1010002) and National Natural Science Foundation of China (No. 12222109). Wenming Zou is supported by National Key R\&D Program of China (Grant 2023YFA1010001) and National Natural Science Foundation of China (No. 12171265 and 12271184).

\section*{Data availibility statement}
No data was used for the research described in the article.

\section*{Declarations}
\subsection*{Conflict of interest}
The authors declare that they have no conflict of interests, they also confirm that the manuscript complies to the Ethical Rules applicable for this journal.

	\bibliographystyle{amsplain}

\end{document}